\DeclareMathOperator{\Aut}{Aut}
\DeclareMathOperator{\C}{\mathbb{C}}
\DeclareMathOperator{\Z}{\mathbb{Z}}
\DeclareMathOperator{\N}{\mathbb{N}}
\DeclareMathOperator{\h}{\mathcal{H}}
\DeclareMathOperator{\hh}{\mathcal{\widehat{H}}}
\DeclareMathOperator{\B}{\mathcal{B}}
\DeclareMathOperator{\Tr}{Tr}
\newtheorem{thm}{Theorem}[section]
\newtheorem{lem}[thm]{Lemma}
\newtheorem{cor}[thm]{Corollary}
\theoremstyle{definition}
\newtheorem{defn}[thm]{Definition}
\newtheorem{example}[thm]{Example}
\theoremstyle{remark}
\newtheorem{rem}[thm]{Remark}
\numberwithin{equation}{section}
\begin{document}

\title{Von Neumann equivalence and $M_{d}$ type approximation properties}
\author[B. Battseren]{Bat-Od Battseren}
\address{Research Institute for Mathematical Sciences, Kyoto University, Kyoto 606-8502, JAPAN}
\curraddr{}
\email{batoddd@gmail.com}
\thanks{The author is supported by JSPS fellowship program (P21737).}
\subjclass[2020]{Primary 43A22; Secondary 16W22, 37A20, 20F65, 43A07, 46L07}

\date{}

\dedicatory{}
\begin{abstract}
We show that $M_d$-approximation-property, $M_d$-weak-amenability, and $M_d$-weak-Haagerup-property are stable under von Neumann equivalence (hence also Measure equivalence and W*-equivalence). We also show that these properties are inherited from lattices.
\end{abstract}

\maketitle

\section{Introduction}
Von Neumann equivalence (vNE) was introduced in \cite{IPR19} on the class of discrete countable groups as a coarser version of Gromov's Measure equivalence (ME) and W*-equivalence (W*E). In the same paper, the (non-)approximation type properties, amenability, Haagerup property (also known as a-T-menability), Kazhdan's property (T), and proper proximality, were shown to be stable under vNE. It is followed by some extension works that weak amenability, weak Haagerup property, Haagerup-Kraus's approximation property \cite{Ish21}, and exactness \cite{Bat22} are vNE invariants too. In this paper, we contribute by more invariants, namely $M_d$ type approximation properties. 

One of the motivations to study vNE is its similarity to ME. Many techniques used in measured group theory could be used in vNE, leading to potential applications in W*E. It is known that the groups $PSL_n(\Z)$, $n\geq 3$ represent distinct ME-classes \cite[Theorem A]{Fur99}, while it is an open problem for W*E. It would be delightful if the ME-rigidity result of \cite{Fur99} extends to vNE and W*E.

The algebras $M_d(G)$ were introduced in \cite{Pis05} by means of studying the similarity problem. Let $G$ be a locally compact group and let $d$ be an integer greater than 1. The space $M_d(G)$ consists of the functions $\varphi\in C_b(G)$ admitting Hilbert spaces $\h_i$ and bounded maps $\xi_i:G\rightarrow \B(\h_i,\h_{i-1})$ with $i=1,...,d$ and $\h_d = \h_0 = \C$ such that 
\begin{align*}
\varphi(x_1\cdots x_d) = \xi_1(x_1)\cdots \xi_d(x_d)(1)\quad \text{for all}\quad x_1,...,x_d\in G.
\end{align*}
With the norm $\|\varphi\|_{M_d} = \inf \sup_{x_1\in G}\|\xi_1(x_1)\|\cdots \sup_{x_d\in G}\|\xi_d(x_d)\|$ and pointwise product, $M_d(G)$ becomes a commutative Banach algebra. When $d=2$, this is the algebra $B_2(G)$ of Herz-Schur multipliers (also denoted by $M_0A(G)$ and $M_{cb}(G)$ in some literature). The similarity problem asks if all unitarizable groups are amenable (see \cite{Pis01} for a survey). Unitarizable groups indeed reflect amenable groups. For example, in \cite{Boz85} the amenability of $G$ was characterized by the equality $M_2(G) = B(G)$, and in \cite{Pis05} it is shown that if $G$ is unitarizable, then $M_d (G)= B(G)$ for some $d\geq 2$.

In \cite{Ver22}, the algebra $M_d(G)$ was used to define $M_d$-approximation-property (resp. $M_d$-weak-amenability), which is a refinement of Haagerup Kraus' approximation property (resp. weak amenability). When $d=2$ these properties coincide, and when $d$ increases the property gets stronger. As an application of this new approximation type property, it is shown in \cite[Theorem 1.2]{Ver22} that a discrete group satisfying $M_d$-approximation-property for all $d\geq 2$ has an affirmative answer for the similarity problem. 

Weak amenability and approximation property are known to be stable under vNE \cite{Ish21}. This fact inspired us to prove the following theorem.
\begin{thm}\label{thm A} Let $d$ be an integer greater than 1.
The following are stable under von Neumann equivalence.
\begin{enumerate}
\item $M_d$-approximation-property.
\item $M_d$-weak-amenability and $M_d$-Cowling-Haagerup constant $\Lambda_{CH}(G,d)$.
\item $M_d$-weak-Haagerup-property and $M_d$-weak-Haagerup constant $\Lambda_{WH}(G,d)$.
\end{enumerate}
\end{thm}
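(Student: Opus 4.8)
The plan is to follow the blueprint used in \cite{Ish21} to prove that weak amenability and the approximation property are vNE invariants, and to upgrade it so that it respects the $d$-fold factorizations defining $M_d(G)$ and tracks the factorization norms exactly; this last point is what yields the statements about the constants $\Lambda_{CH}(G,d)$ and $\Lambda_{WH}(G,d)$. Throughout I would fix a von Neumann $(G,H)$-coupling: a semifinite von Neumann algebra $(\mathcal{M},\Tr)$ carrying commuting trace-preserving actions of $G$ and $H$ together with finite-trace fundamental projections for each action. Since vNE is symmetric, it suffices to produce, from a net witnessing one of the three properties on $G$, a net witnessing the same property on $H$ with no increase in the relevant $M_d$-constant; exchanging the roles of $G$ and $H$ then promotes the resulting inequality of constants to an equality.

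First I would reformulate the three properties uniformly. Each asserts the existence of a net $(\varphi_\alpha)$ in $M_d(G)$ with $\varphi_\alpha\to 1$ pointwise and $\limsup_\alpha\|\varphi_\alpha\|_{M_d}\le\Lambda$, where the $\varphi_\alpha$ are in addition required to lie in a prescribed ideal: finitely supported for $M_d$-weak-amenability, vanishing at infinity for $M_d$-weak-Haagerup, and converging in the $\sigma(M_d,Q)$-topology for the relevant predual $Q$ for $M_d$-approximation-property. The object to construct is therefore a transfer map $\varphi\mapsto\widetilde\varphi$ from functions on $G$ to functions on $H$, built from the coupling, which (i) does not increase the $M_d$-norm, (ii) fixes the constant function $1$ and is continuous for pointwise convergence, and (iii) carries each prescribed ideal of $G$ into the corresponding ideal of $H$.

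The heart of the argument is property (i). Given a factorization
\begin{align*}
\varphi(x_1\cdots x_d) = \xi_1(x_1)\cdots\xi_d(x_d)(1), \qquad \xi_i\colon G\to\B(\h_i,\h_{i-1}),
\end{align*}
with $\h_0=\h_d=\C$, I would induce a factorization of $\widetilde\varphi$ of the \emph{same} length $d$ by amplifying each intermediate space, replacing $\h_i$ by an amplification $\h_i\otimes L^2(p\mathcal{M}p)$ attached to an $H$-fundamental projection $p$, and replacing $\xi_i$ by $\xi_i$ tensored with the piece of the coupling cocycle that lives in the $i$-th slot. The point is that the coupling supplies contractions that are isometric on the fundamental domain, so the supremum of each new slot is bounded by the supremum of the original $\xi_i$; taking the product of the $d$ slot-suprema then gives $\|\widetilde\varphi\|_{M_d}\le\|\varphi\|_{M_d}$. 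Distributing the single coupling cocycle across the $d$ slots so that adjacent factors still compose in $\B(\h_i,\h_{i-1})$, and so that the telescoping product $x_1\cdots x_d$ on the $G$-side is matched with $h_1\cdots h_d$ on the $H$-side through the cocycle identity, is the main obstacle: for $d=2$ there is a single inner product and Ishan's construction applies directly, whereas for general $d$ one has $d-1$ intermediate spaces and must insert the coupling compatibly at each junction while keeping the trace normalization on $p\mathcal{M}p$ consistent.

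Once (i)--(iii) are in place the theorem follows quickly. Applying the transfer map to a witnessing net $(\varphi_\alpha)$ for $G$ yields a net $(\widetilde\varphi_\alpha)$ on $H$ with $\widetilde\varphi_\alpha\to 1$ pointwise, with $\limsup_\alpha\|\widetilde\varphi_\alpha\|_{M_d}\le\limsup_\alpha\|\varphi_\alpha\|_{M_d}$, and lying in the correct ideal, which is exactly the corresponding property on $H$ together with the inequality of constants; symmetry then upgrades this to equality. For $M_d$-weak-Haagerup the only additional point to verify in (iii) is that the transferred multipliers still vanish at infinity, which I expect to reduce to a properness property of the coupling combined with the finiteness of $\Tr$ of the fundamental projection.
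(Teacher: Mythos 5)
Your overall strategy --- an induction map built from the coupling that is $M_d$-norm non-increasing, fixes $1$, and preserves the appropriate function classes --- is exactly the strategy of the paper, but the proposal leaves the actual heart of the proof unresolved and sets up one of the auxiliary steps in a way that would fail. The central point, which you yourself flag as ``the main obstacle,'' is the construction of a length-$d$ factorization of the induced function $\widehat\varphi(\gamma)=\Tr(\sigma_\gamma(p)\theta_p(\varphi))$ with slot-by-slot norm control. Your suggestion --- amplify each $\h_i$ to $\h_i\otimes L^2(pMp)$ and tensor $\xi_i$ with ``the piece of the coupling cocycle in the $i$-th slot'' --- is not a construction, and as stated it cannot work: the translates $\sigma_{\gamma,s}(p)$ do not live in the corner $pMp$, so the spaces $L^2(pMp)$ are too small to carry them. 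What Lemma~\ref{lem ind} actually does is take \emph{alternating one-sided} compressions, $\hh_{2i}=(L^2(M,\Tr)p)\,\overline{\otimes}\,\h_{2i}$ and $\hh_{2i+1}=(pL^2(M,\Tr))\,\overline{\otimes}\,\h_{2i+1}$, define $\widehat\xi_j(\gamma)$ as sums over $s\in\Lambda$ of $\sigma_{\gamma,s}$-translates multiplied by $p$ alternately on the left and on the right, use the fact that $p$ is a fundamental domain to get pairwise orthogonality of $\{p\sigma_{\gamma,s}(xp)\}_{s\in\Lambda}$ (hence contractivity of each slot), and then verify by a telescoping computation and the change of variables $s_1\mapsto s_1s_d^{-1}\cdots s_2^{-1}$ that the $d$-fold product collapses to $\Tr\left(\sigma_{\gamma_1\cdots\gamma_d}(p)\,\theta_p(\varphi)\right)$. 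None of this alternating structure, which is precisely what makes adjacent factors compose and the norms multiply, is supplied by your sketch.

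There are two further gaps in the bookkeeping of the three properties. First, $M_d$-WA and $M_d$-AP require the witnessing net to lie in the Fourier algebra $A$, and your plan to preserve the ideal of finitely supported functions fails: the induction map does not take finitely supported functions to finitely supported functions, since $\widehat\varphi(\gamma)=\sum_s\varphi(s)\Tr(\sigma_\gamma(p)\sigma_s^{-1}(p))$ is typically nonzero for infinitely many $\gamma$. What is actually needed, and what Lemma~\ref{lem Fourier algebra} proves, is the contractive inclusion $\Phi(A(\Lambda))\subseteq A(\Gamma)$; this rests on the nontrivial fact that the Koopman representation $\sigma^0_\Gamma$ is unitarily equivalent, via the unitary $\mathcal{F}_q$ built from a $\Gamma$-fundamental domain $q$, to a multiple of $\lambda_\Gamma$ --- an ingredient entirely absent from your outline. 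Second, for $M_d$-AP your continuity requirement (ii) (continuity for pointwise convergence) is insufficient: the AP net carries no uniform $M_d$-bound, and only for bounded nets does pointwise convergence imply $\sigma(M_d,X_d)$-convergence. One must prove that $\Phi$ is $\sigma(M_d,X_d)$-to-$\sigma(M_d,X_d)$ continuous directly, which Lemma~\ref{lem sigma conv} does via normality of $\Phi$ on $\ell^\infty$ together with a duality and density argument identifying $\Phi^*$ as a contraction $X_d(\Gamma)\rightarrow X_d(\Lambda)$. Without these three ingredients the theorem does not follow from your outline.
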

We call these three properties \textit{$M_d$ type approximation properties}. The precise definitions are given in Section \ref{section Md app prop}.
The key step of the proof lies in showing that if $\Gamma$ and $\Lambda$ are discrete countable vNE groups, then the induction map $\Phi: \ell^\infty(\Lambda)\rightarrow \ell^\infty (\Gamma)$ introduced in \cite{Ish21} gives a norm decreasing map $\Phi:M_d(\Lambda)\rightarrow M_d(\Gamma)$. As we mentioned before, vNE is coarser than both ME and W*E, hence the following.
\begin{cor} Let $d$ be an integer greater than 1.
$M_d$ type approximation properties, $M_d$-Cowling-Haagerup constant, and $M_d$-weak-Haagerup constant are stable under ME and W*E.
\end{cor}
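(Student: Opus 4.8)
The plan is to obtain the corollary as an immediate consequence of Theorem \ref{thm A}, using only the position of von Neumann equivalence at the bottom of the hierarchy of coupling relations. The essential input is recorded in \cite{IPR19}: vNE is coarser than both ME and W*E, meaning that every measure-equivalence coupling and every W*-equivalence coupling between two discrete countable groups $\Gamma$ and $\Lambda$ gives rise to a von Neumann coupling. In particular, if $\Gamma$ and $\Lambda$ are measure equivalent, or W*-equivalent, then they are von Neumann equivalent.

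First I would restate these two implications precisely from \cite{IPR19}, so that the hypotheses of Theorem \ref{thm A} are literally met by any ME- or W*E-equivalent pair. Then, given that $\Gamma$ and $\Lambda$ are measure equivalent (resp. W*-equivalent) and that $\Gamma$ possesses one of the three $M_d$ type approximation properties, the two groups are von Neumann equivalent by the above, and Theorem \ref{thm A} transfers the property to $\Lambda$ without change. The numerical statement follows the same way: Theorem \ref{thm A} asserts that the constants $\Lambda_{CH}(G,d)$ and $\Lambda_{WH}(G,d)$ are preserved under vNE, and since ME and W*E are special cases of vNE, the equality of the constants persists for these finer relations.

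I do not expect any genuine obstacle here, since all of the analytic content is already contained in Theorem \ref{thm A}; the corollary is purely a matter of invoking the implications that ME and W*E each imply vNE. The only point I would be careful to spell out is that Theorem \ref{thm A} yields \emph{equality} of the constants rather than mere two-sided finiteness, so that the corollary can legitimately claim stability of $\Lambda_{CH}(G,d)$ and $\Lambda_{WH}(G,d)$ and not only of the qualitative properties.
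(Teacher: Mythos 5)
Your proposal is correct and matches the paper's own reasoning exactly: the corollary is deduced immediately from Theorem \ref{thm A} together with the fact (recorded in \cite{IPR19} and illustrated by the paper's two examples constructing vNE couplings from an ME coupling and from a $*$-isomorphism $L(\Gamma)\cong L(\Lambda)$) that ME and W*E each imply vNE. Your added care about equality of the constants $\Lambda_{CH}(G,d)$ and $\Lambda_{WH}(G,d)$, rather than mere finiteness, is consistent with what the proof of Theorem \ref{thm A} actually delivers via the symmetry argument.
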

In \cite[Theorem 1.3]{Ver22}, it was shown that countable groups acting properly on a finite-dimensional CAT(0) cube complex are $M_d$-weakly amenable. The proof was inspired by Valette's work involving analytic family of representations \cite{Val90}. We prove the following theorem using Bo\.{z}ejko-Picardello's method \cite{BP93}. Note that Theorem \ref{thm tree} is an immediate consequence of \cite[Theorem 1.3]{Ver22}, but the proof ideas differ.
\begin{thm}\label{thm tree}
A discrete group acting properly on a tree is $M_d$-weakly-amenable with $M_d$-Cowling-Haagerup constant 1 for any $d\geq 2$.
\end{thm}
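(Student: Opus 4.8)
The plan is to produce, for each $\varepsilon > 0$ and each finite set $F \subseteq G$, a finitely supported radial function that is $\varepsilon$-close to $1$ on $F$ and has $M_d$-norm at most $1 + \varepsilon$; a net of such functions then witnesses $M_d$-weak-amenability with $\Lambda_{CH}(G,d) = 1$. First I would fix a base vertex $o$ of the tree $T$ and set $|g| = d(go, o)$. Properness of the action guarantees that each ball $\{g : |g| \le n\}$ is finite, so any radial function $g \mapsto f(|g|)$ with $f$ finitely supported is automatically finitely supported on $G$; moreover the maps $\xi_i : G \to \B(\h_i,\h_{i-1})$ realizing such a function will be built from the tree geometry along the orbit $Go$, so all estimates can be carried out geometrically on $T$.

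The argument rests on two ingredients. The first is soft: the Haagerup functions $\psi_r(g) = r^{|g|}$, $0 < r < 1$, are positive definite, because $|\cdot|$ is conditionally negative definite via the isometric embedding $x \mapsto (\mathbf 1[e \in [o,x]])_e$ of $T$ into $\ell^2$ of the edge set. Since the Fourier--Stieltjes algebra embeds contractively into $M_d(G)$ and a normalized positive definite function has $B(G)$-norm equal to its value at the identity,
\begin{equation*}
\|\psi_r\|_{M_d} \le \|\psi_r\|_{B(G)} = \psi_r(e) = 1 \qquad \text{for every } d \ge 2 .
\end{equation*}
Thus the main term is uniformly controlled in $M_d$, which is exactly why the constant does not depend on $d$. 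The second ingredient is the Bo\.{z}ejko--Picardello construction, which I would use to realize the sphere functions $\chi_n(g) = \mathbf 1[|g| = n]$ as elements of $M_d(G)$ with an explicit polynomial bound $\|\chi_n\|_{M_d} \le C(n)$, building the chain $\xi_1, \dots, \xi_d$ from the confluent (median) of a vertex pair with the root and from the first and last edges of the connecting geodesic.

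With these in hand the theorem assembles quickly. Writing $\psi_r\,\mathbf 1[|\cdot| \le N] = \psi_r - \sum_{n > N} r^n \chi_n$ and using $\|\chi_n\|_{M_d} \le C(n)$ with $r < 1$, the tail has $M_d$-norm at most $\sum_{n>N} r^n C(n) \to 0$ as $N \to \infty$ for fixed $r$; hence $\limsup_N \|\psi_r\,\mathbf 1[|\cdot| \le N]\|_{M_d} \le \|\psi_r\|_{M_d} \le 1$. As $r \to 1^-$ the $\psi_r$ converge pointwise to $1$, so a diagonal choice of $r_k \to 1$ and $N_k \to \infty$ yields a net of finitely supported functions converging to $1$ pointwise with $M_d$-norms tending to $1$, giving $\Lambda_{CH}(G,d) = 1$.

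The main obstacle is the Bo\.{z}ejko--Picardello estimate for the genuinely multilinear $M_d$-norm. Because $\|\cdot\|_{M_d} \ge \|\cdot\|_{M_2}$, the classical Herz--Schur bound on $\chi_n$ does not suffice, and the product $\xi_1(x_1)\cdots\xi_d(x_d)(1)$ must reproduce the radial value while being \emph{independent} of the $d-1$ intermediate factors. Arranging the tree's first-edge and median data into a chain of $d$ operators whose composition has this invariance, while keeping the product of the suprema $\sup_x \|\xi_i(x)\|$ polynomially bounded in $n$, is the technical heart of the proof; the positive-definiteness of $\psi_r$ is then what allows these polynomial bounds, once summed against $r^n$, to collapse to the sharp constant $1$.
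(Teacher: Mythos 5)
Your outer argument is exactly the paper's: the radial exponentials $\psi_r(g)=r^{d(gv_0,v_0)}$ (the paper's $\rho_t$ with $r=e^{-t}$) are positive definite by Bo\.{z}ejko--Picardello, hence $\|\psi_r\|_{M_d}\leq\|\psi_r\|_{B}=\psi_r(e)=1$; one truncates them to finitely supported functions, which lie in $A(\Gamma)$; and the truncation error is controlled by a polynomial bound on the $M_d$-norms of the sphere functions $\chi_n$ summed against $r^n$, giving constant $1$ in the limit. All of that is correct and matches the paper step for step.

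The problem is that you never establish the one statement carrying all the weight: a polynomial bound $\|\chi_n\|_{M_d}\leq C(n)$. You label it ``the technical heart'' and defer it to ``the Bo\.{z}ejko--Picardello construction,'' but, as you yourself point out, the classical BP argument is a $d=2$ (Herz--Schur kernel) argument, and for $d>2$ one must exhibit a chain $\xi_1,\dots,\xi_d$ whose composition $\xi_1(x_1)\cdots\xi_d(x_d)$ depends only on the product $x_1\cdots x_d$; your gesture toward ``median and first/last edge data'' is not such a construction, and nothing in the proposal indicates how it would chain through $d-2$ intermediate factors. What the paper actually does here is quite different and constitutes the real content of its proof: fix a geodesic ray $\gamma_0$ from $v_0$ (an end of the tree), let $\gamma_v$ be the ray from $v$ merging into $\gamma_0$, take $\h_1=\dots=\h_{d-1}$ to be the closed span of the truncated ray vectors $\oplus_{k=0}^n\delta_{\gamma_{yv_0}(k)}$, let the intermediate operators translate ray vectors, $\oplus_k\delta_{\gamma_{yv_0}(k)}\mapsto\oplus_k\delta_{\gamma_{xyv_0}(k)}$, and let the first operator pair against the reversed base ray $\oplus_k\delta_{\gamma_{v_0}(n-k)}$. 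The composition then computes not $\chi_n$ but the parity sum $\varphi_n=\sum_{i=0}^{\lfloor n/2\rfloor}\chi_{n-2i}$ evaluated at $x_1\cdots x_d$ (in a tree, the truncated ray from $xv_0$ meets the reversed ray from $v_0$ in exactly one index iff $d(xv_0,v_0)\leq n$ and has the parity of $n$), yielding $\|\varphi_n\|_{M_d}\leq n+1$ and hence $\|\chi_n\|_{M_d}=\|\varphi_n-\varphi_{n-2}\|_{M_d}\leq 2n$. Note also that the well-definedness and norm control of these operators is itself delicate, since the ray vectors are far from orthogonal and left translation distorts their overlaps; this is precisely the kind of difficulty that cannot be absorbed into a citation. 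As it stands, your proposal is a correct reduction of the theorem to its hardest step, not a proof of it.
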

Some group properties are extendable to von Neumann algebras in a way that a group has the property if and only if its group von Neumann algebra has the property. For example, a discrete group is  amenable if and only if its group von Neumann algebra is injective \cite{Con76}. This kind of characterization is usually observed for discrete groups. In this sense, working on discrete groups is advantageous. The following theorem allows one to work with discrete groups instead of its ambient group when investigating $M_d$ type approximation properties.
\begin{thm}\label{thm B}
Let $d$ be an integer greater than 1.
Let  $\Gamma$ be a lattice of a locally compact group $G$. If $\Gamma$ has $M_d$-approximation-property, then so does $G$. Moreover, we have $\Lambda_{CH}(G,d)\leq \Lambda_{CH}(\Gamma,d)$ and $\Lambda_{WH}(G,d)\leq \Lambda_{WH}(\Gamma,d)$.
\end{thm}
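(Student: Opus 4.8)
The plan is to reduce Theorem \ref{thm B} to the construction of a norm-decreasing induction map $\Phi\colon M_d(\Gamma)\to M_d(G)$, in direct analogy with the map underlying Theorem \ref{thm A}. First I would record that a locally compact group admitting a lattice is unimodular, so there is a $G$-invariant probability measure on $\Gamma\backslash G$; concretely, I fix a Borel fundamental domain $\Omega\subseteq G$ for the left $\Gamma$-action, normalized so that its Haar measure is $1$, and write each $g\in G$ uniquely as $g=p(g)q(g)$ with $p(g)\in\Gamma$ and $q(g)\in\Omega$. The right $G$-action on $\Omega\cong\Gamma\backslash G$ is measure preserving, and I set $\alpha(\eta,y)=p(\eta y)\in\Gamma$ for $\eta\in\Omega$, $y\in G$, which satisfies the cocycle identity $\alpha(\eta,y_1y_2)=\alpha(\eta,y_1)\,\alpha(\eta\cdot y_1,y_2)$, where $\eta\cdot y:=q(\eta y)$.

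Given $\varphi\in M_d(\Gamma)$ with factorization $\xi_i\colon\Gamma\to\B(\h_i,\h_{i-1})$, $\h_0=\h_d=\C$, I would induce it by tensoring the interior Hilbert spaces with $L^2(\Omega)$: put $\tilde\h_0=\tilde\h_d=\C$ and $\tilde\h_i=L^2(\Omega,\h_i)$ for $1\le i\le d-1$, define $\tilde\xi_i(y)$ for interior $i$ as the weighted shift $[\tilde\xi_i(y)F](\eta)=\xi_i(\alpha(\eta,y))\,F(\eta\cdot y)$, and define the endpoint maps by $[\tilde\xi_d(y)(1)](\eta)=\xi_d(\alpha(\eta,y))(1)$ and $\tilde\xi_1(y)F=\int_\Omega\xi_1(\alpha(\eta,y))F(\eta\cdot y)\,d\eta$. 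The crux is a telescoping computation: composing the $\tilde\xi_i(y_i)$ and applying the cocycle identity repeatedly collapses the chain of arguments $\alpha(\eta\cdot(y_1\cdots y_{k-1}),y_k)$ into $\gamma_{k-1}^{-1}\gamma_k$ with $\gamma_k=\alpha(\eta,y_1\cdots y_k)$ and $\gamma_0=e$, so that the integrand becomes $\varphi(\alpha(\eta,y_1\cdots y_d))$ and hence
\[
\Phi(\varphi)(y):=\tilde\xi_1(y_1)\cdots\tilde\xi_d(y_d)(1)=\int_\Omega\varphi(\alpha(\eta,y))\,d\eta,\qquad y=y_1\cdots y_d .
\]
This shows $\Phi(\varphi)$ depends only on the product $y$, giving a genuine element of $M_d(G)$. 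Since the $G$-action on $\Omega$ is measure preserving, each interior $\tilde\xi_i(y)$ has norm at most $\sup_\gamma\|\xi_i(\gamma)\|$, and the Cauchy--Schwarz inequality together with $|\Omega|=1$ bounds the two endpoint maps by $\sup_\gamma\|\xi_d(\gamma)\|$ and $\sup_\gamma\|\xi_1(\gamma)\|$ respectively; multiplying and taking the infimum over factorizations yields $\|\Phi(\varphi)\|_{M_d(G)}\le\|\varphi\|_{M_d(\Gamma)}$.

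With $\Phi$ in hand I would transfer the approximating nets. If $(\varphi_n)$ witnesses one of the three properties for $\Gamma$---a net bounded in $\|\cdot\|_{M_d(\Gamma)}$ by a value approaching the relevant constant and converging to $1$ in the appropriate sense, lying in the appropriate class (in $A(\Gamma)$ for $M_d$-weak-amenability, vanishing at infinity for $M_d$-weak-Haagerup, with the weak-$*$ convergence for $M_d$-approximation-property)---then $(\Phi(\varphi_n))$ is bounded in $\|\cdot\|_{M_d(G)}$ by the same value, and the integral formula gives $\Phi(\varphi_n)(y)=\int_\Omega\varphi_n(\alpha(\eta,y))\,d\eta\to 1$ by dominated convergence since $|\Omega|<\infty$. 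The constant bounds $\Lambda_{CH}(G,d)\le\Lambda_{CH}(\Gamma,d)$ and $\Lambda_{WH}(G,d)\le\Lambda_{WH}(\Gamma,d)$, and the $M_d$-approximation-property of $G$, then follow once the class and convergence requirements are checked on $G$.

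The main obstacle I anticipate is precisely this last verification: showing that each $\Phi(\varphi_n)$ lies in the function class required by the definition on $G$ (Fourier-algebra membership, respectively decay at infinity), and promoting the pointwise convergence to convergence uniform on compact subsets of $G$. This is delicate for non-uniform lattices, where $\Omega$ has finite measure but need not be relatively compact, so that $\{\alpha(\eta,y):\eta\in\Omega,\ y\in K\}$ need not be finite for compact $K\subseteq G$; I expect to control it using the finiteness of the measure of $\Omega$ together with equicontinuity coming from the uniform $M_d$-norm bound, and a separate argument for the continuity and Borel measurability of $\Phi(\varphi)$ and the maps $\tilde\xi_i$. The norm estimate and the telescoping identity, by contrast, I expect to be routine once the cocycle bookkeeping is set up.
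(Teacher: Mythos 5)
Your construction of the induction map is correct, and it is genuinely different in route from the paper's: you build the $M_d$-factorization on $G$ directly, tensoring the interior Hilbert spaces with $L^2(\Omega)$ and using the cocycle $\alpha(\eta,y)$, whereas the paper observes that the lattice situation is a special case of its von Neumann equivalence framework --- taking $(M,\Tr)=(L^\infty(G),\int dx)$ with the $G_{\mathrm{d}}\times\Gamma$ action and fundamental domain $p=\chi_\Omega$, the vNE induction map of Lemma \ref{lem ind} (already proved for discrete groups) coincides with the classical Haagerup map $\widetilde\varphi(g)=\int_\Omega\varphi(\gamma(g\omega))\,d\omega$, which gives the norm bound on $M_d(G_{\mathrm{d}})$ for free. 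Both routes produce the same map and the same estimate; yours is self-contained, the paper's recycles Theorem \ref{thm A}'s machinery.

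However, there is a genuine gap in how you transfer the approximating nets, and it is not among the obstacles you flagged. For $M_d$-AP the defining net $(\varphi_n)\subseteq A(\Gamma)$ is \emph{not} assumed bounded in $\|\cdot\|_{M_d}$ (your phrase ``bounded \dots by a value approaching the relevant constant'' only describes the WA and WH cases), so your dominated-convergence argument for $\Phi(\varphi_n)(y)\to 1$ has no dominating function and collapses; moreover, even where a uniform bound is available, the definitions demand convergence in the $\sigma(M_d(G),X_d(G))$-topology, not pointwise convergence. The missing idea is to prove that the dual map $\Phi^*$ restricts to a contraction $L^1(G)\to\ell^1(\Gamma)$, hence extends to $X_d(G)\to X_d(\Gamma)$, making $\Phi$ weak-$*$-to-weak-$*$ continuous; then $\langle\Phi(\varphi_n),f\rangle=\langle\varphi_n,\Phi^*f\rangle\to\langle 1,f\rangle$ settles all three properties uniformly, including the unbounded AP case. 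This is exactly how the paper proceeds, following Haagerup--Kraus. Separately, the membership facts you defer to the end (continuity of $\Phi(\varphi)$, that $\varphi\in A(\Gamma)$ forces $\Phi(\varphi)\in A(G)$, and that $\varphi\in c_0(\Gamma)$ forces $\Phi(\varphi)\in C_0(G)$) are indispensable --- without them $\Phi(\varphi_n)$ need not lie in the class the definitions require --- and they are not routine for non-uniform lattices; the efficient resolution is that your induced map is given by the same $d$-independent integral formula as the $d=2$ map, so the known results for $d=2$ (Haagerup for $A(G)$, Knudby for $C_0(G)$) apply verbatim, which is precisely how the paper disposes of them.
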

The assertions for $d=2$ were done in \cite[Theorem 2.4]{HK94}, \cite[Theorem 2.6]{Haa16}, and \cite[Theorem A]{Knu16}, respectively. 

\section{Preliminaries}
In this section, we recall some definitions and useful facts.
\subsection{$M_{d}$ type approximation properties}\label{section Md app prop}
Throughout the paper, $d$ is always an integer greater than 1, $G$ is a locally compact group, $H$ is a closed subgroup of $G$, and $\Gamma$ and $\Lambda$ are discrete groups. We denote by $G_{\mathrm{d}}$ the discrete realization of $G$. When $E$ is a set, we denote by $\chi_E$ the characteristic function of $E$.

The \textit{Fourier-Stieltjes algebra} $B(G)$ consists of the coefficients of all unitary representations of $G$. For $\varphi \in B(G)$, its norm is given by $\|\varphi\|_B = \inf \|u\|\|v\|$ where the infimum runs through all unitary representations $(\pi,\h)$ and vectors $u,v\in\h$ satisfying $\varphi (x) = \langle \pi (x)u,v\rangle$ for all $x\in G$. With this norm and pointwise product, $B(G)$ is a commutative Banach algebra. 
The \textit{Fourier algebra} $A(G)$  is a subalgebra of $B(G)$ consisting of all coefficients of the left regular representation $\lambda_G:G\rightarrow \mathcal{U} (L^2(G))$ endowed with the norm $\|\varphi\|_A = \inf \|u\|_2\|v\|_2$ where the infimum runs through all vectors $u,v\in L^2(G)$ satisfying $\varphi(x) = \langle \lambda_G(x)u,v\rangle$ for all $x\in G$. To mention some facts about the Fourier algebra: 
\begin{enumerate}
\item The infimum $\|\varphi\|_A = \inf \|u\|_2\|v\|_2$ reaches its minimum.
\item $A(G)$ is a commutative Banach algebra.
\item The inclusion $A(G)\rightarrow B(G)$ is isometric.
\item $C_c(G)\cap B(G)$ is dense in $A(G)$.
\item $A(G)\subseteq C_0(G)$.
\end{enumerate}
See \cite{Eym64},\cite{KL18}, and \cite[Appendix F]{Run20} for more about these algebras.

\begin{defn}[{\cite{Pis05}}]\label{defn Md} A continuous bounded function $\varphi\in C_b(G)$ is called a \textit{$M_d$-multiplier} 
if there exist Hilbert spaces $\h_i$ and bounded maps $\xi_i:G\rightarrow \B(\h_i,\h_{i-1})$ for $i=1,...,d$ such that $\h_d = \h_0 = \C$  and
\begin{align}\label{eq Md}
\varphi(g_1\cdots g_d) = \xi_1(g_1)\cdots \xi_d(g_d)\quad \text{for all}\quad g_1,...,g_d\in G.
\end{align} 
Here $\B(\C)$ is identified with $\C$. 
We denote by $M_d(G)$ the space of all $M_d$-multipliers of $G$ endowed with the norm $\|\varphi\|_{M_d} = \inf \sup_{g_1\in G}\|\xi_1(g_1)\|\cdots \sup_{g_d\in G}\|\xi_d(g_d)\|$ where the infimum runs through all $\h_i$'s and $\xi_i$'s satisfying \eqref{eq Md}. 
\end{defn}
When $d=2$, it is the algebra of completely bounded Fourier multipliers $M_{cb}A(G)$. 
It is obvious that the inclusion $M_{d+1}(G)\subseteq M_d(G)$ is norm decreasing. The coefficients of a unitary representation are typical elements of $M_d(G)$. To see that, take any $\varphi\in B(G)$. There exist a unitary representation $(\pi,\h)$ and vectors $u,v\in\h$ satisfying $\varphi (x) = \langle \pi (x)u,v\rangle$ for all $x\in G$. Then by choosing $\h_1 = ... = \h_{d-1} = \h$ and $\xi_d(g) (1)= \pi(g)u$, $\xi_i(g) = \pi(g)$, $\xi_1(g) (w)= \langle \pi(g) w, v\rangle$ for all $g\in G$, $w\in\h$, and $i=2,...,d-1$, we get  $\varphi(g_1\cdots g_d) = \langle \pi(g_1\cdots g_d)u,v\rangle = \xi_1(g_1)\cdots \xi_d(g_d)$ for all $g_1,...,g_d\in G$ and $\|\varphi\|_{M_d}\leq \|v\|\|w\|$. Thus we have the following norm decreasing inclusions
\begin{align*}
A(G)\subseteq B(G)\subseteq  M_{d+1} (G)\subseteq M_d (G)\subseteq M_{cb}A(G)\subseteq C_b(G).
\end{align*}

Note that the inclusion $M_d(G)\rightarrow M_d(G_{\mathrm{d}})$ is isometric. Indeed, we have $M_d(G) = M_d(G_{\mathrm{d}})\cap C(G)$. It is also worth noting that if $H$ is a closed subgroup of $G$, then the restriction map $M_d(G)\rightarrow M_d(H)$ is norm decreasing.

Just like $B(G)$ and $M_{cb} A(G)$, the space $M_d(G)$ is in duality with $L^1(G)$ via
\begin{align}\label{eq:duality}
\langle\varphi,\psi\rangle=\int_G \psi\varphi dx \quad  \text{for}\quad \varphi\in M_d(G),\psi\in L^1(G).
\end{align}
We denote by $X_d(G)$ the completion of $L^1(G)$ in the dual space $M_d(G)^*$. As explained in \cite{Pis05}, if $G$ is a discrete group, then for any $d\geq 2$ the Banach space $M_d(G)$ is isometrically isomorphic to the dual of $X_d(G)$. For $d=2$, the same result holds for general locally compact groups \cite[Proposition 1.10]{dCH85}. It seems that the case $d\geq 3$ for locally compact group is unclear. Nevertheless, for our use we only need the duality between $M_d(G)$ and $X_d(G)$ extended from \eqref{eq:duality}.
\begin{defn}\label{defn properties}
Let $G$ be a locally compact group and let $d\geq 2$ be an integer.
\begin{enumerate}
\item We say that $G$ has \textit{$M_d$-approximation-property} ($M_d$-AP in short) if there is a net $(\varphi_i)$ of functions in the Fourier algebra $A(G)$ such that $\varphi_i\rightarrow 1$ in $\sigma(M_d(G),X_d(G))$-topology.

\item We say that $G$ is \textit{$M_d$-weakly-amenable} ($M_d$-WA in short) if there is a net $(\varphi_i)$ of functions in $A(G)$ such that $\sup_i\|\varphi_i\|_{M_d}<\infty$ and $\varphi_i\rightarrow 1$ in $\sigma(M_d(G),X_d(G))$-topology. The minimum possible value of $\sup_i\|\varphi_i\|_{M_d}$ is called \textit{$M_d$-Cowling-Haagerup} constant and denoted by $\Lambda_{CH}(G,d)$. If $G$ is not $M_d$-WA, we simply write $\Lambda_{CH}(G,d)=\infty$.
\item We say that $G$ has \textit{$M_d$-weak-Haagerup-property} ($M_d$-WH in short) if there is a net $(\varphi_i)$ of functions in $M_d(G)\cap C_0(G)$ such that $\sup_i\|\varphi_i\|_{M_d}<\infty$ and $\varphi_i\rightarrow 1$ in $\sigma(M_d(G),X_d(G))$-topology. The minimum possible value of $\sup_i\|\varphi_i\|_{M_d}$ is called \textit{$M_d$-weak-Haagerup} constant and denoted by $\Lambda_{WH}(G,d)$. If $G$ does not have $M_d$-WH, we simply write $\Lambda_{WH}(G,d)=\infty$.
\end{enumerate}
\end{defn}
\begin{rem}
For $M_d$-AP and $M_d$-WA, since $C_c(G)\cap A(G)$ is dense in $A(G)$ and the inclusion $A(G)\subseteq M_d(G)$ is norm decreasing, we can assume that the functions $\varphi_i$ are compactly supported. It is also worth noting that for a bounded net $(\varphi_i)$ in $M_d(G)$, the uniform convergence on compact subsets is stronger than the convergence in $\sigma(M_d,X_d)$-topology.
\end{rem}

\begin{example}\label{example Z}
Let us check the simplest example, the infinite cyclic group $G=\Z$. Put $F_n = \{1,...,n\}$ and $\varphi_n = \langle \lambda(\cdot)\dfrac{\chi_{F_n}}{n^{1/2}},\dfrac{\chi_{F_n}}{n^{1/2}}\rangle \in A(\Z)$. Then 
\begin{align*}
\|\varphi_n\|_{M_d}\leq \|\varphi_n\|_{A} \leq \left \|\dfrac{\chi_{F_n}}{n^{1/2}}\right \|^2_2= 1,
\end{align*} and for any $m\in \Z$
\begin{align*}
\varphi_n(m) = \left\{ \begin{array}{ll}
\dfrac{n-|m|}{n}\quad &\text{if}\quad |m|<n\\
0\quad &\text{otherwise}
\end{array}\right. \longrightarrow 1
\end{align*} as $n\rightarrow\infty$. Therefore, $\Z$ is $M_d$-WA with $\Lambda_{CH}(\Z,d) = 1$. A similar proof works for any locally compact amenable groups.
\end{example}
We have the implications 
\begin{align*}
\text{Amenability}&\Rightarrow M_d\text{-WA}\Rightarrow M_d\text{-AP}\Rightarrow \text{AP},\\
M_d\text{-WA}&\Rightarrow M_d\text{-WH}, \quad M_{d+1}\text{-AP}\Rightarrow M_d\text{-AP},\\
M_{d+1}\text{-WA}&\Rightarrow M_d\text{-WA}, \quad M_{d+1}\text{-WH}\Rightarrow M_d\text{-WH}.
\end{align*}
Since the non-abelian free groups are $M_d$-WA \cite{Ver22}, the first implication is strict for any $d\geq 2$. The second implication is also strict for any $d\geq 2$ because of the semidirect product $SL_2(\Z)\ltimes \Z^2$ \cite[Corollary 4.4]{Ver22}. The fourth implication is also strict. Indeed, the wreath product $\Gamma =\Z\wr F_2$ satisfies Haagerup property \cite[Theorem 1.1]{CSV12}. In particular, it is $M_d$-WH with $\Lambda_{WH}(\Gamma,d)=1$ for all $d\geq 2$. However, it is not weakly amenable \cite[Corollary 4]{Oz12}.
The strictness of the other implications seem unclear.

We call the properties in Definition \ref{defn properties} \textit{$M_d$ type approximation properties}. When $d=2$, these properties and constants are extensively studied and provided many interesting examples and applications. We refer the readers to \cite{dCH85, Haa16, CH89, HK94, BO08, Knu16} for further references.
\begin{proof}[Proof of Theorem \ref{thm tree}] The proof idea is essentially from \cite{BP93}.
Suppose that $\Gamma$ acts properly on a tree $T$. We identify $T$ with its vertex set. Let $d$ be the usual distance on $T$. 

Fix a base point $v_0\in T$ and a geodesic ray $\gamma_0:\N_0\rightarrow T$ beginning from $v_0$. Every point $v\in T$ admits a geodesic ray $\gamma_v:\N_0\rightarrow T$ that begins at $v$ and eventually merges with $\gamma_0$. For $n\in \N_0$, define 
\begin{align*}
E_n = \{x\in \Gamma: d(xv_0,v_0) =n\},\quad  \chi_n = \chi_{E_n},\quad \text{and}
\quad \varphi_{n} =\sum_{i=0}^{\lfloor n/2\rfloor}\chi_{n-2i}.
\end{align*} Since the action is proper, these functions are finitely supported. Put $\h_0=\h_d=\C$ and 
\begin{align*}
\h_1 = ... = \h_{d-1} = \overline{\text{span}}\{\oplus_{k=0}^n\delta_{\gamma_{yv_0}(k)}:y\in \Gamma\} \subseteq \bigoplus_{k=0}^n \ell^2(T).
\end{align*} Define 
\begin{align*}
\xi_d(x)(1) &= \oplus_{k=0}^n \delta_{\gamma_{xv_0}(k)}\\
\xi_i(x)\left (\oplus_{k=0}^n\delta_{\gamma_{yv_0}(k)}\right ) &= \oplus_{k=0}^n\delta_{\gamma_{xyv_0}(k)}\\
\xi_1(x)\left (\oplus_{k=0}^n\delta_{\gamma_{yv_0}(k)}\right ) &= \langle\oplus_{k=0}^n\delta_{\gamma_{xyv_0}(k)},  \oplus_{k=0}^n \delta_{\gamma_{v_0}(n-k)} \rangle
\end{align*} for $x,y\in \Gamma$ and $i=2,...,d-1$. It is easily checked  that these are well defined bounded operators with $\|\xi_d(x)\|\leq (n+1)^{1/2}$, $\|\xi_i(x)\|=1$, and $\|\xi_1(x)\|\leq (n+1)^{1/2}$. Moreover, 
\begin{align*}
\xi_1(x_1)\cdots \xi_d(x_d)= \sum_{k=0}^n \langle\delta_{\gamma_{x_1\cdots x_dv_0}(k)}, \delta_{\gamma_{v_0}(n-k)} \rangle = \varphi_n(x_1\cdots x_d)
\end{align*} for all $x_1,...,x_d\in \Gamma$. It follows that $\|\varphi_n\|_{M_d}\leq n+1$ and $\|\chi_n\|_{M_d}=\|\varphi_n-\varphi_{n-2}\|_{M_d}\leq \|\varphi_n\|_{M_d}+\|\varphi_{n-2}\|_{M_d} \leq 2n$.

On the other hand, the kernels $(u,v)\in T\times T\mapsto \exp (-td(u,v))$ are positive definite for any $t>0$  \cite[Proposition 3.2]{BP93}. In particular, the functions $\rho_t: x\in \Gamma\mapsto \exp (-td(xv_0,v_0))$ are positive definite. Moreover, $\rho_t\rightarrow 1$ as $t\rightarrow 0$ and $\|\rho_t\|_{M_d}\leq \|\rho_t\|_{B}= \rho_t(e)= 1$. The only problem about the net $(\rho_t)$ is that we do not know if $\rho_t$ is in the Fourier algebra $A(\Gamma)$. To regulate that, we approximate $\rho_t$ by the finitely supported functions defined as
\begin{align*}
\varphi_{n,t} =  \rho_t \sum_{k=0}^n\chi_k=\sum_{k=0}^n \chi_k e^{-tk} \quad \text{for}\quad  n\in\N_0.
\end{align*}
Observe that
\begin{align*}
\|\rho_t-\varphi_{n,t}\|_{M_d} \leq \sum_{k=n+1}^\infty 2k e^{-tk}\rightarrow 0  
\end{align*}
as $n\rightarrow \infty$. This completes the proof.
\end{proof}

\subsection{Von Neumann equivalence (vNE)}
\begin{defn} Let $G$, $\Lambda$, and $\Gamma$ be discrete groups. Let $(M,\Tr)$ be a semifinite von Neumann algebra endowed with a faithful normal semifinite trace. The group of trace preserving *-automorphisms of $M$ is denoted by $\Aut(M,\Tr)$. A group homomorphism $\sigma: G\rightarrow \Aut(M,\Tr)$ is called a \textit{$G$-action on $(M,\Tr)$}. A \textit{fundamental domain} for the action $\sigma$ is a projection $p\in M$ such that $\sum_{x\in G}\sigma_x (p)= 1$, where the convergence is with respect to the strong operator topology. 
We say that $\Gamma$ is a \textit{von Neumann equivalence subgroup (or $vNE$-subgroup)} of $\Lambda$  if there exists an action
$\sigma: \Gamma\times \Lambda\rightarrow \Aut(M,\Tr)$ and fundamental domains $p$ and $q$ for each of $\Lambda$ and $\Gamma$ actions, respectively, such that the trace $\Tr(p)$ is finite. Furthermore, if the trace $\Tr(q)$ is finite, we say that $\Lambda$ and $\Gamma$ are von Neumann equivalent (vNE) and write $\Lambda\sim_{vNE} \Gamma$.
\end{defn}
\begin{example}
Suppose that $\Gamma$ is a subgroup of $\Lambda$. Then $\Gamma$ is a vNE-subgroup of $\Lambda$. To see that, define $M=\ell^\infty(\Lambda)$ and $\Tr (f)=\sum_{s\in \Lambda} f(s)$ for $f\in M_+$. Then we have the trace preserving action $\sigma: \Gamma\times \Lambda\rightarrow \Aut(M,\Tr)$, $\sigma_{\gamma,s}(f)(x)= f(s^{-1}x\gamma)$ for all $s,x\in\Lambda$, $\gamma\in\Gamma$, and $f\in M$. The projection $\delta_e \in M$ is a finite trace fundamental domain for $\Lambda$-action. If $S\subseteq \Lambda$ is a set of representatives of the left $\Gamma$-cosets, the projection $\chi_S\in M$ is a fundamental domain for $\Gamma$-action.
\end{example}
The vNE is originated from Measure equivalence (ME) and W*-equivalence (W*E). The following two examples show their connection.
\begin{example}
Recall that $\Gamma$ and $\Lambda$ are said ME if there is a standard measure space $(X,m)$ and commuting, measure preserving, free actions of $\Gamma$ and $\Lambda$ on $(X,m)$ with finite measure fundamental domains $\Omega_\Gamma$ and $\Omega_\Lambda$, respectively. In this case, $\Gamma\times \Lambda$ acts on $(M,\Tr)=(L^\infty(X,m),\int dm)$, and the characteristic functions $\chi_{\Omega_\Gamma},\chi_{\Omega_\Lambda}\in M$ are finite trace fundamental domains for $\Gamma$ and $\Lambda$ actions, respectively. Thus $\Lambda\sim_{vNE} \Gamma$.
\end{example}  
\begin{example}
Recall that $\Gamma$ and $\Lambda$ are W*E if their group von Neumann algebras $L(\Gamma)$ and $L(\Lambda)$ are *-isomorphic. In this case, by the standard representation argument, the Hilbert spaces $\ell^2(\Gamma)$, $L^2(L(\Gamma))$, $L^2(L(\Lambda))$, and $\ell^2(\Lambda)$ are isomorphic, and $\Gamma\times \Lambda$ acts on $M =\B(\ell^2(\Gamma))\cong\B(\ell^2(\Lambda))$ by left and right regular representations. The projection onto the subspace $\C \delta_e$ becomes a common fundamental domain with finite trace for both $\Gamma$ and $\Lambda$ actions, hence $\Lambda\sim_{vNE} \Gamma$.
\end{example} 
We need the following fact from \cite{IPR19,Ish21}.

\begin{lem}\label{lem theta}
Suppose that we have a trace preserving action $\sigma:G\rightarrow \Aut(M,\Tr)$. If $p\in M$ is a fundamental domain for $\sigma$, then the map $\theta_p:\ell^\infty (G)\rightarrow M$, $\theta_p(f) = \sum_{x\in G} f(x)\sigma_{x^{-1}} (p)$ is a normal faithful *-representation.
\end{lem}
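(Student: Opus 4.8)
The plan is to first establish that the translates $\{\sigma_x(p)\}_{x\in G}$ are mutually orthogonal projections, after which $\theta_p$ becomes the canonical diagonal embedding of $\ell^\infty(G)$ attached to an orthogonal partition of unity, and the remaining claims follow by standard arguments.

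For orthogonality, I would fix $x\in G$ and compress the defining identity $1=\sum_{z\in G}\sigma_z(p)$ by $\sigma_x(p)$ on both sides. Using that $\sigma_x(p)$ is a projection and that multiplication by a fixed bounded operator is strongly continuous, this gives
\[
\sigma_x(p)=\sum_{z\in G}\sigma_x(p)\sigma_z(p)\sigma_x(p).
\]
Every summand is positive and the $z=x$ term is already $\sigma_x(p)$, so the remaining positive sum vanishes; hence $\sigma_x(p)\sigma_z(p)\sigma_x(p)=0$ for every $z\neq x$. Rewriting the left-hand side as $(\sigma_z(p)\sigma_x(p))^*\sigma_z(p)\sigma_x(p)$ yields $\sigma_z(p)\sigma_x(p)=0$. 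Reindexing by $x\mapsto x^{-1}$, the projections $e_x:=\sigma_{x^{-1}}(p)$ are mutually orthogonal and satisfy $\sum_x e_x=1$ in the strong operator topology.

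With $(e_x)$ an orthogonal resolution of the identity, I would check that for $f\in\ell^\infty(G)$ the estimate $\sum_x|f(x)|^2\|e_x\xi\|^2\le\|f\|_\infty^2\|\xi\|^2$ makes the partial sums of $\sum_x f(x)e_x$ strongly Cauchy, so $\theta_p(f)$ is a well-defined element of $M$ with $\|\theta_p(f)\|\le\|f\|_\infty$. The algebraic identities $e_xe_y=\delta_{x,y}e_x$ and $e_x^*=e_x$ then give multiplicativity and $*$-preservation directly, so $\theta_p$ is a unital $*$-homomorphism. Faithfulness is immediate: $p\neq 0$ (otherwise the fundamental-domain sum would be $0\neq 1$) forces $\Tr(e_x)=\Tr(p)>0$ and hence $e_x\neq 0$, so $\theta_p(f)=0$ implies $f(y)e_y=e_y\theta_p(f)=0$ and thus $f=0$.

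For normality I would verify that $\theta_p$ preserves suprema of bounded increasing nets in $\ell^\infty(G)_+$: if $f_i\nearrow f$ then $\theta_p(f_i)$ increases strongly to some $T\le\theta_p(f)$, and compressing by each $e_x$ gives $e_xTe_x=f(x)e_x=e_x\theta_p(f)e_x$; positivity of $\theta_p(f)-T$ forces $(\theta_p(f)-T)e_x=0$ for all $x$, and summing over $x$ yields $T=\theta_p(f)$. The one genuinely nontrivial point is the orthogonality of the translates; everything after it is bookkeeping with an orthogonal partition of unity, the only mild care being the strong-operator convergence of the infinite sums.
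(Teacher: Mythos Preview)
Your argument is correct. The paper itself does not supply a proof of this lemma; it merely records it as a fact imported from \cite{IPR19,Ish21}, so there is no in-paper proof to compare against. Your approach---first extracting the mutual orthogonality of the translates $\sigma_x(p)$ from the fundamental-domain identity, and then reading off all the properties of $\theta_p$ from the fact that it is the diagonal embedding attached to an orthogonal resolution of the identity---is exactly the standard route, and each step you outline is sound. One minor remark: in the faithfulness step you do not actually need the trace; since each $\sigma_{x^{-1}}$ is an automorphism, $p\neq 0$ already gives $e_x\neq 0$ directly.
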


\section{Main section}\label{section proof}
\subsection{Induction map}
In this subsection, we prove Theorem \ref{thm A}. The main tool is the induction map derived from vNE \cite[Lemma 3.1]{Ish21}. 
We explain the construction now. Suppose that we have an action $\sigma : \Gamma\times \Lambda \rightarrow \Aut (M,\Tr)$ that establishes $\Gamma$ as a vNE-subgroup of $\Lambda$. Let $p$ and $q$ be fundamental domains for $\Lambda$ and $\Gamma$ actions, respectively. We assume that the trace is normalized so that $\Tr (p)=1$. The induction map is defined as
\begin{align*}
\Phi: \varphi\in\ell^\infty(\Lambda) \mapsto \widehat{\varphi} \in \ell^\infty(\Gamma)
,\quad \widehat{\varphi}(\gamma) = \Tr (\sigma_\gamma(p)\theta_p (\varphi)) \quad \text{for all}\quad
\gamma\in\Gamma,
\end{align*} where $\theta_p$ is as in Lemma \ref{lem theta}.
In \cite[Lemma 3.1]{Ish21}, it was shown that the restriction $\Phi:M_2(\Lambda)\rightarrow M_2(\Gamma)$ of the above mapping is well defined and norm decreasing. The following lemma extends the latter result.
\begin{lem}\label{lem ind} Let $d\geq 2$ be an integer.
If $\varphi\in M_d(\Lambda)$, then $\widehat{\varphi}\in M_d(\Gamma)$ and $\|\widehat{\varphi}\|_{M_d}\leq \|\varphi\|_{M_d}$.
\end{lem}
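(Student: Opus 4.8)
The plan is to transport the defining factorization of $\varphi$ through the Koopman picture of the von Neumann equivalence. Write $L^2(M)$ for the GNS space of $(M,\Tr)$ and let $\hat p\in L^2(M)$ be the vector implementing $p$; since $\Tr(p)=1$ it is a unit vector. The two commuting trace preserving actions give commuting unitary representations $U^\Gamma$, $U^\Lambda$ of $\Gamma$ and $\Lambda$ on $L^2(M)$ implementing $\sigma$, and $\theta_p$ becomes a normal unital $*$-representation of $\ell^\infty(\Lambda)$ on $L^2(M)$. First I would rewrite the induction map: using $\Tr(\sigma^\Gamma_\gamma(p)a)=\langle a\,U^\Gamma_\gamma\hat p,U^\Gamma_\gamma\hat p\rangle$ (valid because $\sigma^\Gamma_\gamma(p)$ is a finite-trace projection) together with $\theta_p(\varphi)=\sum_{s}\varphi(s)\sigma^\Lambda_{s^{-1}}(p)$, one obtains, for $g=\beta_0^{-1}\beta_d$,
\[
\widehat{\varphi}(\beta_0^{-1}\beta_d)=\sum_{s\in\Lambda}\varphi(s)\,\langle\,U^\Gamma_{\beta_0}\hat p,\;U^\Lambda_s\,U^\Gamma_{\beta_d}\hat p\,\rangle .
\]

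Next I would expose the relevant ``cocycle coordinates''. Inserting the resolution of the identity $\sum_{a\in\Lambda}\sigma^\Lambda_{a^{-1}}(p)=\theta_p(1)=1$ into both slots, and using the covariance $U^\Lambda_s\,\sigma^\Lambda_{a^{-1}}(p)=\sigma^\Lambda_{sa^{-1}}(p)\,U^\Lambda_s$ together with the orthogonality of the projections $\{\sigma^\Lambda_{a^{-1}}(p)\}_{a}$, the double sum collapses to a sum in which $s$ is forced to equal $b^{-1}a$. Writing $v^\beta_a:=\sigma^\Lambda_{a^{-1}}(p)\,U^\Gamma_\beta\hat p$ (orthogonal in $a$, with $\sum_a\|v^\beta_a\|^2=1$) this yields a clean expression of $\widehat{\varphi}(\beta_0^{-1}\beta_d)$ as a weighted sum of $\varphi(b^{-1}a)$ against inner products of the $v$'s. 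Now I would feed in the $M_d$-factorization $\varphi(a_0^{-1}a_d)=\xi_1(a_0^{-1}a_1)\cdots\xi_d(a_{d-1}^{-1}a_d)$ and insert $d-1$ further copies of the resolution of the identity to split the single $L^2(M)$-pairing into a chain of $d$ factors indexed by intermediate points $a_1,\dots,a_{d-1}\in\Lambda$ and $\beta_1,\dots,\beta_{d-1}\in\Gamma$. Regrouping the $i$-th operator $\xi_i$ with the $i$-th segment of the chain produces bounded maps $\eta_i\colon\Gamma\to\B(\h_i\otimes L^2(M),\h_{i-1}\otimes L^2(M))$ for the interior indices and, after pairing the two ends against $\hat p$, maps $\eta_1(\gamma)\in\B(\h_1\otimes L^2(M),\C)$ and $\eta_d(\gamma)\in\B(\C,\h_{d-1}\otimes L^2(M))$, so that $\h_0=\h_d=\C$ is respected and $\widehat{\varphi}(\gamma_1\cdots\gamma_d)=\eta_1(\gamma_1)\cdots\eta_d(\gamma_d)$.

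For the norm estimate I would show that each building block is a contraction with constant $\sup_s\|\xi_i(s)\|$. The point is that the block carrying $\xi_i$ factors as an amplification of $\xi_i$ composed with an isometry of the form $S_\beta=\sum_a\,(\text{unitary})\otimes|v^\beta_a\rangle$; the off-diagonal cross terms in $\|S_\beta\,\cdot\|^2$ cancel after summation precisely because $\sum_a\sigma^\Lambda_{a^{-1}}(p)=1$ and the $\sigma^\Lambda_{a^{-1}}(p)$ are orthogonal, which forces the relevant overlap $\Tr\!\big(p\,\sigma^\Lambda_{c^{-1}}(p)\big)=\delta_{c,e}$. Together with $\|\hat p\|=\Tr(p)^{1/2}=1$ and the unitarity of $U^\Gamma$, $U^\Lambda$, this gives $\sup_\gamma\|\eta_i(\gamma)\|\le\sup_s\|\xi_i(s)\|$ for every $i$, hence $\|\widehat{\varphi}\|_{M_d}\le\prod_i\sup_s\|\xi_i(s)\|$; taking the infimum over all factorizations of $\varphi$ yields $\|\widehat{\varphi}\|_{M_d}\le\|\varphi\|_{M_d}$.

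The main obstacle is the general-$d$ telescoping in the second step: one must check that the chained resolutions of the identity reproduce exactly the pattern $\xi_1(a_0^{-1}a_1)\cdots\xi_d(a_{d-1}^{-1}a_d)$, with the $\Gamma$-endpoints entering only through $\beta_0,\beta_d$ (the intermediate $\beta_i$ dropping out of the product, just as the intermediate $a_i$ drop out of the original factorization), and that the resulting interior block maps are genuinely contractive rather than merely bounded. This is the part that goes beyond the $d=2$ computation of \cite{Ish21}: for $d=2$ there is a single pairing and the isometry argument is immediate, whereas for $d\ge 3$ the interplay between the orthogonality of the fundamental-domain projections and the covariance of $U^\Lambda$ with $\theta_p$ has to be used repeatedly and coherently along the whole chain. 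Once this bookkeeping is in place the proof of both the membership $\widehat{\varphi}\in M_d(\Gamma)$ and the norm bound follows.
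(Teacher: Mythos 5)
Your overall route is in fact the same as the paper's: rewrite $\widehat{\varphi}$ through the trace/Koopman pairing, expand along the orthogonal family $\{\sigma^\Lambda_{a^{-1}}(p)\}_{a\in\Lambda}$ supplied by the fundamental domain, amplify each $\xi_i$ to an operator between Hilbert spaces built from $\h_i$ and $L^2(M,\Tr)$, get contractivity from orthogonality, and verify the $d$-fold product formula by a telescoping in which intermediate indices drop out. Your Koopman rewriting of $\widehat{\varphi}$ and your two-index expansion are correct. The problem is that the entire content of the lemma sits in the step you defer as ``bookkeeping,'' and the architecture you propose for that step fails as stated. Your spaces are $\h_i\otimes L^2(M)$ and every vector you use, e.g. $v^\beta_a=\sigma^\Lambda_{a^{-1}}(p)U^\Gamma_\beta\hat p$, is built from \emph{left} multiplication by the projections $\sigma^\Lambda_{a^{-1}}(p)$. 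But the natural interior block $x\otimes u\mapsto\sum_{s}p\sigma_{\gamma,s}(x)\otimes\xi_i(s)(u)$ is not bounded by $\sup_s\|\xi_i(s)\|$ on all of $L^2(M)\otimes\h_i$: the vectors $\{p\sigma_{\gamma,s}(x)\}_{s}$ are guaranteed pairwise orthogonal only when $x$ carries $p$ on the \emph{right}, i.e. $x\in L^2(M,\Tr)p$. Concretely, for $M=\ell^\infty(\Lambda)$ with counting trace and $p=\delta_e$ one has $p\sigma_s(x)=x(s^{-1})\delta_e$, all parallel, and the block is even unbounded on $\ell^2(\Lambda)\otimes\h_i$. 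Worse, the output of such a block is of the form $p(\cdots)$, carrying $p$ on the left but not on the right, so the obstruction reappears at the next link of the chain; your ``amplification composed with an isometry $S_\beta$'' factorization is only valid sector-by-sector for a fixed incoming partial product $\beta$, whereas an $M_d$-factorization requires each $\eta_i(\gamma_i)$ to be a single contraction on one fixed Hilbert space, depending on $\gamma_i$ alone.

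This is precisely the difficulty the paper's proof is engineered around, and the device it uses is absent from your sketch: the spaces alternate between $(L^2(M,\Tr)p)\overline{\otimes}\h_{2i}$ and $(pL^2(M,\Tr))\overline{\otimes}\h_{2i+1}$, and the operators alternately attach $p$ on the left, $xp\otimes u\mapsto\sum_s p\sigma_{\gamma,s}(xp)\otimes\xi_{2i}(s)(u)$, and on the right, $px\otimes v\mapsto\sum_s \sigma_{\gamma,s}(px)p\otimes\xi_{2i+1}(s)(v)$. It is exactly this alternation (equivalently, using both the left and the right action of $M$ on $L^2(M,\Tr)$, not only the left one) that makes the orthogonality, hence contractivity, argument available at every link of the chain, and it is also what makes the telescoping close up, after the change of variables, into $\Tr\left(\sigma_{\gamma_1\cdots\gamma_d}(p)\theta_p(\varphi)\right)=\widehat{\varphi}(\gamma_1\cdots\gamma_d)$. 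For $d=2$ there are no interior blocks, which is why this issue is invisible in the $d=2$ argument of \cite{Ish21} that you extrapolate from. So the gap is not mere bookkeeping: completing your argument requires introducing this two-sided alternating structure (or an equivalent), which is the genuinely new ingredient of the paper's proof for $d\geq 3$.
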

\begin{proof}

Let $\varphi\in M_d(\Lambda)$. There exist Hilbert spaces $\h_1,...,\h_d$  and bounded maps $\xi_i:\Lambda \rightarrow \B (\h_i,\h_{i-1})$ for $i=1,...,d$ such that $\h_d=\h_0=\C$ and
\begin{align*}
\varphi(s_1\cdots s_d) = \xi_1(s_1)\cdots \xi_d(s_d)\quad \text{for all}\quad s_1,...,s_d\in \Lambda.
\end{align*}
We assume that $d$ is even so that we can write $d=2k$ for some integer $k$. The odd case is done similarly. 
Define the Hilbert spaces
\begin{align*}
\hh_0&=\hh_d = \C,
\\
\hh_{2i}  &= (L^2(M,\Tr)p) \overline{\otimes} \h_{2i} \quad \text{for} \quad i=1,...,k-1,
\\
\hh_{2i+1} &= (pL^2(M,\Tr)) \overline{\otimes} \h_{2i+1}\quad  \text{for}\quad  i=0,...,k-1.
\end{align*}
For $\gamma\in \Gamma$, define the operators $\widehat{\xi}_j(\gamma) : \hh_j\rightarrow \hh_{j-1}$ for $j=1,...,d$ as 
\begin{align*}
\widehat{\xi}_d (\gamma)(1) &=  \sum_{s\in \Lambda} p\sigma_{\gamma,s}(p) \otimes \xi_{d}(s)(1),
\\
\widehat{\xi}_{2i} (\gamma) (xp\otimes u) &=  \sum_{s\in \Lambda} p\sigma_{\gamma,s}(xp) \otimes \xi_{2i}(s)(u),
\\
\widehat{\xi}_{2i+1} (\gamma) (px\otimes v) &=  \sum_{s\in \Lambda} \sigma_{\gamma,s}(px)p \otimes \xi_{2i+1}(s)(v),
\\
\widehat{\xi}_1(\gamma) (px\otimes w) &=  \sum_{s\in \Lambda}\Tr\left ( \sigma_{\gamma,s}(px) p \right  ) \xi_1(s)(w)
\end{align*} for all $x\in L^2(M,\Tr)$, $u\in \h_{2i}$, $v\in \h_{2i+1}$, and $w\in \h_{1}$. We claim  that these are well defined bounded operators. Let us check $\widehat{\xi}_{2i}(\gamma)$. Take any $x=xp\in L^2(M,\Tr)p$ and $\gamma\in\Gamma$. Since $p$ is a finite trace fundamental domain for $\Lambda$-action, the vectors $\{p\sigma_{\gamma,s}(xp) :s\in \Lambda\}$ are pairwise orthogonal and summable in $pL^2(M,\Tr)$. The same thing happens for  $\{p\sigma_{\gamma,s}(x)\otimes \xi_{2i}(s)(v) :s\in \Lambda\}\subseteq \hh_{2i}$ because the operators $\{\xi_{2i}(s):s\in \Lambda \}$ are uniformly bounded. Now, take any finite sum 
$z=\sum_{j=1}^N x_j\otimes v_j\in \hh_{2i}$ where $x_j=x_jp\in L^2(M,\Tr)p$ are pairwise orthogonal. Then, noting that the elements $\{p\sigma_{\gamma,s}(x_j) \otimes \xi_{2i}(s)(v_j): s\in \Lambda, j=1,...,N\}$ are pairwise orthogonal, we get
\begin{align*}
\|\widehat{\xi}_{2i}(\gamma)(z) \|^2 &= \sum_{j=1}^N \sum_{s\in \Lambda} \|p\sigma_{\gamma,s}(x_j)\|^2 \|\xi_{2i}(s)(v_j)\|^2
\\
&\leq \sup_{s\in \Lambda}\|\xi_{2i}(s)\|^2 \sum_{j=1}^N \sum_{s\in \Lambda} \Tr(\sigma_s^{-1}(p)\sigma_\gamma(x_jx_j^*))\|v_j\|^2
\\
&= \sup_{s\in \Lambda}\|\xi_{2i}(s)\|^2 \sum_{j=1}^N \|x_j\|^2\|v_j\|^2
\\
&=\sup_{s\in \Lambda}\|\xi_{2i}(s)\|^2 \|z\|^2.
\end{align*} By density, it follows that $\widehat{\xi}_{2i}(\gamma)$ is a well defined bounded operator with $\|\widehat{\xi}_{2i}(\gamma)\|\leq \sup_{s\in \Lambda}\| \xi_{2i}(s)\|$. A similar argument works for $\widehat{\xi}_d(\gamma)$ and $\widehat{\xi}_{2i+1}(\gamma)$. The operator $\widehat{\xi}_1(\gamma)$ can be seen as the composition 
\begin{align*}
(pL^2(M,\Tr))\overline{\otimes} \h_1 \rightarrow (L^2(M,\Tr)p) \overline{\otimes} \h_0 \cong L^2(M,\Tr)p \xrightarrow{\Tr}
 \C,
\end{align*} where the first mapping defined by 
\begin{align*}
px\otimes w \mapsto \sum_{s\in \Lambda} \sigma_{\gamma,s}(px) p \otimes  \xi_1(s)(w)\quad \text{for all}\quad x\in L^2(M,\Tr)\quad \text{and}\quad  w\in \h_1
\end{align*} is bounded by the previous argument. The trace $\Tr$ is contracting when restricted to $L^2(M,\Tr)p$ because $\Tr (p)=1$. Thus we have $\|\widehat{\xi_1} (\gamma)\|\leq \sup_{s\in \Lambda}\|\xi_1(s)\|$.

Our next claim is that 
\begin{align*}
\widehat{\varphi} (\gamma_1\cdots \gamma_d) = 
\widehat{\xi}_1(\gamma_1)\cdots  \widehat{\xi}_d (\gamma_d)\quad \text{for all}\quad \gamma_1,...,\gamma_d\in \Gamma.
\end{align*}
Should this be true, we get $\widehat{\varphi}\in M_d(\Gamma)$ and $\|\widehat{\varphi}\|_{M_d}\leq \|\varphi\|_{M_d}$.
Observe that 
\begin{align*}
&\widehat{\xi}_1(\gamma_1)\cdots  \widehat{\xi}_d(\gamma_d)  = \widehat{\xi}_1(\gamma_1)\cdots  \widehat{\xi}_{d-1}(\gamma_{d-1}) \sum_{s_d\in \Lambda} p\sigma_{\gamma_d,s_d}(p) \otimes \xi_{d}(s_d)
\\
&= \widehat{\xi}_1(\gamma_1)\cdots  \widehat{\xi}_{d-2}(\gamma_{d-2})\sum_{s_d\in \Lambda}  \sum_{s_{d-1}\in \Lambda} \sigma_{\gamma_{d-1},s_{d-1}}(p)\sigma_{\gamma_{d-1}\gamma_d,s_{d-1}s_d}(p)p \otimes \xi_{d-1}(s_{d-1})\xi_{d}(s_d)
\\
&...
\\
&= \sum_{s_d\in \Lambda} ... \sum_{s_1\in \Lambda}  \Tr\left ( \Pi\left( \sigma_{\gamma_1\cdots \gamma_d,s_1\cdots s_d}(p), ...,\sigma_{\gamma_1,s_1}(p), p\right)\right ) \xi_{1}(s_1)\cdots \xi_{d}(s_d)
\\
&= \sum_{s_d\in \Lambda} ... \sum_{s_1\in \Lambda} \Tr\left (  \Pi\left( \sigma_{\gamma_1\cdots \gamma_d,s_1\cdots s_d}(p), ...,\sigma_{\gamma_1,s_1}(p), p\right)\right )  \varphi(s_1\cdots s_d)
\end{align*} for $\gamma_1,...,\gamma_d\in \Gamma$, where $\Pi:M^{d+1}\rightarrow M$ is the multiplication map defined by
\begin{align*}
\Pi(a_1,...,a_{d+1}) = a_d a_{d-2}\cdots  a_2 a_1 a_3 \cdots  a_{d+1}\quad \text{for all}\quad a_1,...,a_{d+1}\in M.
\end{align*}  By the change of variable $s_1 \mapsto s_1 s_d^{-1} \cdots  s_2^{-1}$
\begin{align*}
&\widehat{\xi}_1(\gamma_1)\cdots  \widehat{\xi}_d(\gamma_d) =
\\
&=\sum_{s_d\in \Lambda} ... \sum_{s_1\in \Lambda} \Tr\left ( \Pi\left( \sigma_{\gamma_1\cdots \gamma_d,s_1}(p), ...,\sigma_{\gamma_1,s_1s_d^{-1}\cdots s_2^{-1}}(p), p\right) \right ) \varphi(s_1)
\\
&=\sum_{s_d\in \Lambda} ... \sum_{s_2\in \Lambda} \Tr\left ( \Pi \left( \sigma_{\gamma_1\cdots \gamma_d}(p), ...,\sigma_{\gamma_1,s_d^{-1}\cdots s_2^{-1}}(p),  \left[\sum_{s_1\in\Lambda}\varphi(s_1)\sigma_{s_1^{-1}}(p)\right] \right) \right )
\\
&=\sum_{s_d\in \Lambda} ... \sum_{s_2\in \Lambda}\Tr\left (\Pi \left( \sigma_{\gamma_1\cdots \gamma_d}(p),... ,\sigma_{\gamma_1,s_d^{-1}\cdots s_2^{-1}}(p), \theta_p (\varphi) \right ) \right )
\\
&= \Tr\left ( \sigma_{\gamma_1\cdots \gamma_d}(p) \theta_p (\varphi) \right ) = \widehat{\varphi}( \gamma_1\cdots \gamma_d).
\end{align*} This completes the proof.
\end{proof}
\begin{lem}\label{lem Fourier algebra}
If $\varphi\in A(\Lambda)$, then $\widehat{\varphi}\in A(\Gamma)$ with $\|\widehat{\varphi}\|_{A(\Gamma)}\leq \|\varphi\|_{A(\Lambda)}$.
\end{lem}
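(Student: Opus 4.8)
The plan is to realize $\widehat{\varphi}$ as a matrix coefficient of the \emph{left regular representation} of $\Gamma$. Let me write $\pi(\gamma)$ for the unitary of $L^2(M,\Tr)$ induced by the trace preserving automorphism $\sigma_\gamma=\sigma_{\gamma,e}$. The first observation is that $\pi$ is a multiple of $\lambda_\Gamma$: since $q$ is a fundamental domain for the $\Gamma$-action, the projections $\{\sigma_\gamma(q)\}_{\gamma\in\Gamma}$ are pairwise orthogonal and sum to $1$, so the map $U:\ell^2(\Gamma)\overline{\otimes}\, qL^2(M,\Tr)\to L^2(M,\Tr)$ determined by $U(\delta_\gamma\otimes\zeta)=\sigma_\gamma(\zeta)$ is a unitary, and a direct check gives $\pi(\gamma')U=U(\lambda_\Gamma(\gamma')\otimes 1)$. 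Note that only the fundamental domain property of $q$ is used here, not finiteness of $\Tr(q)$. Consequently, because a coefficient of $\lambda_\Gamma\otimes 1$ is a coefficient of a multiple of $\lambda_\Gamma$, every function $\gamma\mapsto\langle\pi(\gamma)\xi,\eta\rangle$ lies in $A(\Gamma)$ with $\|\cdot\|_{A(\Gamma)}\leq\|\xi\|_2\|\eta\|_2$.

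Next I would produce such a realization of $\widehat{\varphi}$ using the Fourier-algebra structure of $\varphi$. Choose $u,v\in\ell^2(\Lambda)$ with $\varphi=\langle\lambda_\Lambda(\cdot)u,v\rangle$ and $\|u\|_2\|v\|_2=\|\varphi\|_{A(\Lambda)}$; such $u,v$ exist because the infimum defining the $A$-norm is attained (fact (1) of Section \ref{section Md app prop}). The family $\{\sigma_{s^{-1}}(p)\}_{s\in\Lambda}$ is orthonormal in $L^2(M,\Tr)$: orthogonality holds because $p$ is a fundamental domain for the $\Lambda$-action, and $\|\sigma_{s^{-1}}(p)\|_2^2=\Tr(\sigma_{s^{-1}}(p))=\Tr(p)=1$. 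Hence the vectors $\xi_u:=\sum_{s}u(s)\sigma_{s^{-1}}(p)$ and $\xi_v:=\sum_{t}v(t)\sigma_{t^{-1}}(p)$ converge in $L^2(M,\Tr)$ with $\|\xi_u\|_2=\|u\|_2$ and $\|\xi_v\|_2=\|v\|_2$. This use of square-summability of $u,v$ is exactly what repairs the fact that $\theta_p(\varphi)$ itself need not lie in $L^2(M,\Tr)$.

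The heart of the argument is then the identity $\widehat{\varphi}(\gamma)=\langle\pi(\gamma)\xi_u,\xi_v\rangle$. Expanding the right-hand side and using invariance of $\Tr$ under $\sigma$ (applying $\sigma_{e,t}$) one gets $\langle\sigma_{\gamma,s^{-1}}(p),\sigma_{t^{-1}}(p)\rangle=\langle\sigma_{\gamma,ts^{-1}}(p),p\rangle$, whence $\langle\pi(\gamma)\xi_u,\xi_v\rangle=\sum_{s,t}u(s)\overline{v(t)}\,\langle\sigma_{\gamma,ts^{-1}}(p),p\rangle$. On the other hand, starting from $\widehat{\varphi}(\gamma)=\Tr(\sigma_\gamma(p)\theta_p(\varphi))$ with $\theta_p$ as in Lemma \ref{lem theta} and applying the same trace invariance gives $\widehat{\varphi}(\gamma)=\sum_{r}\varphi(r)\langle\sigma_{\gamma,r}(p),p\rangle$; inserting $\varphi(r)=\sum_t u(r^{-1}t)\overline{v(t)}$ and substituting $r=ts^{-1}$ (equivalently $\sum_s u(s)\overline{v(rs)}=\varphi(r)$) matches the two expressions. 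Together with the first paragraph this yields $\widehat{\varphi}\in A(\Gamma)$ and $\|\widehat{\varphi}\|_{A(\Gamma)}\leq\|\xi_u\|_2\|\xi_v\|_2=\|u\|_2\|v\|_2=\|\varphi\|_{A(\Lambda)}$.

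The step I expect to be the main obstacle is the rearrangement of the double sum in the identity above, which is not absolutely convergent (the naive bound gives $\|u\|_1\|v\|_1=\infty$). I would handle it by never appealing to Fubini directly: $\xi_u,\xi_v$ are honest $L^2$ vectors, so $\langle\pi(\gamma)\xi_u,\xi_v\rangle$ is evaluated through the norm-convergent expansions $\pi(\gamma)\xi_u=\sum_s u(s)\sigma_{\gamma,s^{-1}}(p)$ and $\xi_v=\sum_t v(t)\sigma_{t^{-1}}(p)$, yielding a legitimate iterated sum; while on the $\widehat{\varphi}$ side the series $\theta_p(\varphi)$ converges in the strong operator topology and $\sigma_\gamma(p)\in L^1(M,\Tr)$ defines a normal functional, so $\sum_r\varphi(r)\langle\sigma_{\gamma,r}(p),p\rangle$ also converges. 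Identifying these two iterated sums via the change of variable $r=ts^{-1}$ is then a routine but slightly delicate check. The remaining ingredients, namely the unitary equivalence $\pi\cong\lambda_\Gamma\otimes 1$ and the passage from coefficients of a multiple of $\lambda_\Gamma$ to $A(\Gamma)$, are standard.
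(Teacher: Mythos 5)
Your proposal is correct and follows essentially the same route as the paper's proof: you realize $\widehat{\varphi}$ as a coefficient of the Koopman representation via the vectors $\sum_s u(s)\sigma_{s^{-1}}(p)$ and $\sum_t v(t)\sigma_{t^{-1}}(p)$ (exactly the paper's $\hat{\xi},\hat{\eta}$), and then use the $\Gamma$-fundamental domain $q$ to exhibit the Koopman representation as a multiple of $\lambda_\Gamma$, which forces the coefficient into $A(\Gamma)$ with the right norm bound. The only differences are cosmetic: your intertwining unitary $U$ onto $\ell^2(\Gamma)\overline{\otimes}\,qL^2(M,\Tr)$ replaces the paper's $\mathcal{F}_q$ onto $\ell^2(\Gamma)\overline{\otimes}L^2(M^\Gamma,\tau)$ from \cite[Proposition 4.2]{IPR19}, and you cite as standard the fact (that coefficients of multiples of $\lambda_\Gamma$ lie in $A(\Gamma)$) which the paper verifies by hand with an orthonormal-basis expansion and Cauchy--Schwarz.
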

\begin{proof}
This was explained in the proof of \cite[Theorem 1.1]{Ish21} using \cite[Proposition 4.2]{IPR19}. We reproduce the proof for the convenience of the readers. 

Recall that the Koopman representation associated to the action $\sigma:\Gamma\rightarrow \Aut(M,\Tr)$ is the unitary representation given by
\begin{align*}
\sigma^0_\Gamma: \Gamma\rightarrow \mathcal{U} (L^2(M,\Tr)), \quad \sigma^0_\Gamma(\gamma) x = \sigma_\gamma(x)\quad \text{for all}\quad \gamma\in \Gamma\quad\text{and}\quad x\in \mathfrak{n_{\Tr}},
\end{align*} where $\mathfrak{n}_{\Tr} = \{ x \in M : \Tr (x^*x)<\infty \}$ is dense in $L^2(M,\Tr)$. 

If $\varphi\in A(\Lambda)$, there exist vectors $\xi,\eta\in\ell^2(\Lambda)$ such that $\varphi (s) = \langle \lambda_\Lambda (s)\xi,\eta\rangle$ for all $s\in \Lambda$ and $\|\varphi\|_{A(\Lambda)} = \|\xi\|_2\|\eta\|_2$. Define the vectors
\begin{align*}
\hat{\xi} = \sum_{s\in\Lambda} \xi(s)\sigma_s^{-1} (p) \quad \text{and}\quad 
\hat{\eta} = \sum_{t\in\Lambda} \eta(t)\sigma_t^{-1} (p) \quad \text{in}\quad L^2(M,\Tr).
\end{align*} The calculation 
\begin{align*}
\widehat{\varphi}(\gamma) &= \Tr(\sigma_\gamma (p)\sum_{s\in \Lambda} \varphi(s)\sigma_s^{-1} (p))
\\
&=\sum_{t\in\Lambda}\sum_{s\in \Lambda}\xi(s^{-1}t)\overline{\eta(t)}\Tr(\sigma_\gamma (p) \sigma_s^{-1} (p))
\\
&=\sum_{t\in\Lambda}\sum_{s\in \Lambda}\xi(s)\overline{\eta(t)}\Tr(\sigma_\gamma (p) \sigma_{st^{-1}} (p))
\\
&=\sum_{t\in\Lambda}\sum_{s\in \Lambda}\xi(s)\overline{\eta(t)}\Tr(\sigma_{\gamma,s^{-1}} (p) \sigma_{t^{-1}} (p))
\\
&=\langle \sigma^0_\Gamma (\gamma) \hat{\xi} ,\hat{\eta}\rangle
\end{align*} shows that $\widehat{\varphi}\in B(\Gamma)$ and $\|\widehat{\varphi}\|_{B(\Gamma)} \leq \|\hat{\xi}\| \|\hat{\eta}\|= \|\xi\|_2\|\eta\|_2 =\|\varphi\|_{A(\Lambda)}$. It remains to show that $\widehat{\varphi}\in A(\Gamma)$.

We claim that all coefficients of the Koopman representation $\sigma^0_\Gamma$ are in $A(\Gamma)$. Let $q\in M$ be a fundamental domain for $\Gamma$-action on $M$. Denote by $M^\Gamma$ the von Neumann subalgebra of $M$ consisting of all $\Gamma$-fixed elements. The mapping $\tau: x\in M^\Gamma_+\mapsto \Tr(qx)$ defines a faithful normal semifinite trace on $M^\Gamma$. The operator 
\begin{align*}
\mathcal{F}_q : \ell^2 (\Gamma) \overline{\otimes}L^2(M^\Gamma,\tau)\rightarrow L^2(M,\Tr), \quad \mathcal{F}_q(\delta_\gamma\otimes x) = \sigma_\gamma(q)x
\end{align*} for all $\gamma\in \Gamma$ and $x\in \mathfrak{n}_\tau$ defines a unitary operator. See \cite[Proposition 4.2]{IPR19} for detail of these facts. Observe that 
\begin{align*} 
\sigma^0_\Gamma(\alpha)(\mathcal{F}_q(\delta_\gamma\otimes x))& = \sigma_{\alpha}(\sigma_\gamma(q)x)=\sigma_{\alpha\gamma}(q)x = \mathcal{F}_q (\delta_{\alpha\gamma}\otimes x)
\\
&=\mathcal{F}_q ((\lambda_\Gamma \otimes id)(\alpha) (\delta_\gamma\otimes x))
\end{align*}
for all $\alpha,\gamma\in \Gamma$ and $x\in\mathfrak{n}_\tau\subseteq M^\Gamma$. In other words, the unitary operator $\mathcal{F}_q$ intertwines the amplification $\lambda_\Gamma\otimes id$ of the left regular representation and the Koopman representation $\sigma^0_\Gamma$. It follows that every coefficient of $\sigma^0_\Gamma$ is a coefficient of $(\lambda_\Gamma \otimes id, \ell^2(\Gamma)\overline{\otimes} L^2(M^\Gamma,\tau))$. Let $\xi', \eta'\in \ell^2(\Gamma)\overline{\otimes} L^2(M^\Gamma,\tau)$ be any vectors, and consider the function $\psi(\gamma) = \langle (\lambda_\Gamma \otimes id)(\gamma) \xi',\eta' \rangle$.  Let $\{v_i:i\in I\}$ be an orthonormal basis for $L^2(M^\Gamma,\tau)$. Then we can write 
\begin{align*}
\xi' = \sum_{\alpha\in \Gamma,i\in I} a_{\alpha,i} \delta_\alpha \otimes v_i\quad \text{and}\quad \eta' = \sum_{\beta\in \Gamma,j\in I} b_{\beta,j} \delta_\beta \otimes v_j
\end{align*} for some numbers $a_{\alpha,i},b_{\beta,j}\in \C$ such that 
\begin{align*}
\|\xi'\|^2=\sum_{\alpha\in \Gamma, i\in I} |a_{\alpha,i}|^2\quad \text{and}\quad \|\eta'\|=\sum_{\beta\in \Gamma, j\in I} |b_{\beta,j}|^2.
\end{align*} Developing $\psi (\gamma) = \langle (\lambda_\Gamma\otimes id)(\gamma) \xi',\eta'\rangle$ further, we get
\begin{align*}
\psi(\gamma) &= \sum_{\alpha\in \Gamma, i\in I} \sum_{\beta\in \Gamma, j\in I} a_{\alpha,i}b_{\beta,j}\langle \lambda_\Gamma (\gamma) \delta_\alpha,\delta_\beta\rangle\langle v_i,v_j\rangle
\\
&= \sum_{i\in I} \langle \lambda_\Gamma (\gamma) (\sum_{\alpha\in \Gamma}a_{\alpha,i}\delta_\alpha),(\sum_{\beta\in \Gamma}b_{\beta,i}\delta_\beta)\rangle.
\end{align*}
Using the Cauchy–Schwarz inequality, one can see that the sum
\begin{align*}
\sum_{i\in I} \Big\|\sum_{\alpha\in \Gamma}a_{\alpha,i}\delta_\alpha\Big\|_2\Big\|\sum_{\beta\in \Gamma}b_{\beta,i}\delta_\beta\Big\|_2
\end{align*} is convergent and bounded by $\|\xi'\|_2\|\eta'\|_2$. Thus $\psi\in A(\Gamma)$. In particular, $\widehat{\varphi}\in A(\Gamma)$ and 
$\|\widehat{\varphi}\|_{A(\Gamma)}= \|\widehat{\varphi}\|_{B(\Gamma)}\leq \|\varphi\|_{A(\Lambda)}.$
\end{proof}

\begin{lem}\label{lem sigma conv}
The induction map $\Phi:M_d(\Lambda)\rightarrow M_d(\Gamma)$ is $\sigma(M_d,X_d)$-$\sigma(M_d,X_d)$-continuous.
\end{lem}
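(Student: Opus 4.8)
The plan is to exhibit $\Phi$ as the Banach-space adjoint of a bounded operator $\Psi\colon X_d(\Gamma)\to X_d(\Lambda)$ between the preduals; since every such adjoint is automatically weak-$*$-to-weak-$*$ continuous, this gives the lemma at once. Here I use that for discrete groups $M_d(\Lambda)=X_d(\Lambda)^*$ and $M_d(\Gamma)=X_d(\Gamma)^*$, and that $\ell^1(\Gamma)$ is dense in $X_d(\Gamma)$ by construction. The whole difficulty is to find the correct candidate for $\Psi$ and to check it genuinely maps into the predual $X_d(\Lambda)$ rather than into some larger functional space.

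First I would guess $\Psi$ by unwinding the definitions of $\widehat{\varphi}$ and $\theta_p$. For $\psi\in\ell^1(\Gamma)$ I set
\[
\Psi(\psi)(s)=\sum_{\gamma\in\Gamma}\psi(\gamma)\,\Tr\bigl(\sigma_\gamma(p)\,\sigma_{s^{-1}}(p)\bigr),\qquad s\in\Lambda.
\]
Because $p$ is a fundamental domain for the $\Lambda$-action, the translates $\{\sigma_{s^{-1}}(p):s\in\Lambda\}$ are pairwise orthogonal projections with $\sum_{s}\sigma_{s^{-1}}(p)=1$ strongly (this orthogonality is exactly what is used in the proof of Lemma \ref{lem ind}). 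Each trace above is nonnegative, and normality together with trace-preservation of $\sigma_\gamma$ and $\Tr(p)=1$ gives $\sum_{s}\Tr(\sigma_\gamma(p)\sigma_{s^{-1}}(p))=\Tr(\sigma_\gamma(p))=1$. Hence $\sum_s|\Psi(\psi)(s)|\le\|\psi\|_1$, so $\Psi(\psi)\in\ell^1(\Lambda)\subseteq X_d(\Lambda)$ and $\Psi$ is at least well defined on the dense subspace $\ell^1(\Gamma)$.

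Next I would verify the adjoint relation
\[
\langle\Phi(\varphi),\psi\rangle=\langle\varphi,\Psi(\psi)\rangle\qquad(\varphi\in M_d(\Lambda),\ \psi\in\ell^1(\Gamma)).
\]
This is a direct computation: expand $\widehat{\varphi}(\gamma)=\Tr(\sigma_\gamma(p)\theta_p(\varphi))$ with $\theta_p(\varphi)=\sum_s\varphi(s)\sigma_{s^{-1}}(p)$ and interchange the two summations, the interchange being justified since the double sum is dominated by $\|\varphi\|_\infty\|\psi\|_1<\infty$. The crucial norm estimate for $\Psi$ then comes \emph{for free} from Lemma \ref{lem ind}: for $\psi\in\ell^1(\Gamma)$,
\[
\|\Psi(\psi)\|_{X_d(\Lambda)}=\sup_{\|\varphi\|_{M_d}\le1}|\langle\varphi,\Psi(\psi)\rangle|=\sup_{\|\varphi\|_{M_d}\le1}|\langle\Phi(\varphi),\psi\rangle|\le\|\psi\|_{X_d(\Gamma)},
\]
using $\|\Phi(\varphi)\|_{M_d}\le\|\varphi\|_{M_d}$. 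Thus $\Psi$ is a contraction from a dense subspace of $X_d(\Gamma)$ into the closed subspace $X_d(\Lambda)$, so it extends to a contraction $\Psi\colon X_d(\Gamma)\to X_d(\Lambda)$, and the adjoint relation persists to all $\psi\in X_d(\Gamma)$ by continuity and density.

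Finally, the adjoint relation reads precisely $\Phi=\Psi^*$, which is weak-$*$-to-weak-$*$ continuous, completing the proof. I expect the only genuinely delicate points to be the Fubini interchange and the verification that $\Psi(\psi)$ lands in $X_d(\Lambda)$; both are handled by the summability and orthogonality of the translates of the fundamental domain $p$, while the boundedness is an automatic consequence of the duality $M_d=(X_d)^*$ and Lemma \ref{lem ind}.
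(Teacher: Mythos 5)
Your proposal is correct and follows essentially the same route as the paper: your $\Psi$ is exactly the paper's restriction $\Phi^*|_{\ell^1(\Gamma)}\colon\ell^1(\Gamma)\to\ell^1(\Lambda)$ (with the same trace-preservation/fundamental-domain computation giving the $\ell^1$ bound), and both arguments then use Lemma \ref{lem ind} to extend this to a contraction $X_d(\Gamma)\to X_d(\Lambda)$ by density, concluding weak-$*$ continuity from the adjoint relation. The only cosmetic differences are that the paper packages the $\ell^1$ estimate as normality of $\Phi$ on $\ell^\infty(\Lambda)$ and verifies net convergence directly rather than citing $M_d=(X_d)^*$.
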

\begin{proof} The case $d=2$ was done in \cite[Proposition 3.2.(b)]{Ish21}. The same argument works to see the general case. 

On one hand, the induction map $\Phi: \ell^\infty(\Lambda) \rightarrow \ell^\infty (\Gamma)$ is normal. Indeed, if $(\varphi_i)$ is a bounded net of positive functions in $\ell^\infty(\Lambda)$ with $\varphi_\infty = \sup_i \varphi_i \in \ell^\infty (\Lambda)$, then for any positive $f\in \ell^1(\Gamma)$ we have
\begin{align*}
\langle \widehat{\varphi}_i ,f \rangle &= \sum_{\gamma\in\Gamma} \sum_{s\in \Lambda} f(\gamma)\varphi_i(s) \Tr(\sigma_\gamma (p)\sigma_s^{-1}(p))
\\
&= \sum_{s\in \Lambda} \varphi_i(s) \sum_{\gamma\in\Gamma} f(\gamma)\Tr(\sigma_\gamma (p)\sigma_s^{-1}(p))
\\
&= \langle\varphi_i , g\rangle
\rightarrow \langle\varphi_\infty , g\rangle = \langle \widehat{\varphi}_\infty ,f \rangle,
\end{align*} where we note that the function
$g: s\in \Lambda \mapsto \sum_{\gamma\in\Gamma} f(\gamma)\Tr(\sigma_\gamma (p)\sigma_s^{-1}(p))$ is in $\ell^1(\Gamma)$ since
\begin{align*}
\|g\|_1 = \sum_{\gamma\in\Gamma} \sum_{s\in \Lambda} f(\gamma)\Tr(\sigma_\gamma (p)\sigma_s^{-1}(p)) =\sum_{\gamma\in\Gamma} f(\gamma)=\|f\|_1.
\end{align*} Thus its dual map defines a contraction $\Phi^*|_{\ell^1(\Gamma)}: \ell^1(\Gamma)\rightarrow \ell^1(\Lambda)$. 

On the other hand, we have a contraction $\Phi|_{M_d(\Lambda)}:M_d(\Lambda)\rightarrow M_d(\Gamma)$ by Lemma \ref{lem ind}. It follows that the dual map $(\Phi |_{M_d(\Lambda)})^*: X_d(\Gamma) \rightarrow M_d(\Lambda)^*$ is also a contraction and coincides with $\Phi^*$  on $\ell^1(\Gamma)$. Thus by continuity and density argument, we have $\Phi^*: X_d(\Gamma)\rightarrow X_d(\Lambda)$ well defined and contracting. Now, if $\varphi_i\rightarrow \varphi_\infty$ in $\sigma(M_d(\Lambda),X_d(\Lambda))$, then for any $f\in X_d(\Gamma)$ we have
\begin{align*}
\langle \widehat{\varphi}_i,f \rangle = \langle \varphi_i,\Phi^*(f) \rangle \rightarrow \langle \varphi_\infty,\Phi^*(f) \rangle = \langle\widehat{\varphi}_\infty,f \rangle,
\end{align*} showing that the map $\Phi:M_d(\Lambda)\rightarrow M_d(\Gamma)$ is $\sigma(M_d,X_d)$-$\sigma(M_d,X_d)$-continuous.
\end{proof}

\begin{lem}[{\cite[Lemma 3.1.(b)]{Ish21}}]\label{lem c0}
If $\varphi\in c_0(\Lambda)$, then $\widehat{\varphi}\in c_0(\Gamma)$.
\end{lem}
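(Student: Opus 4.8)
The plan is to expand the induction map explicitly and to exploit that the array of trace coefficients behaves like a probability weighting. Writing out the definitions, for every $\gamma\in\Gamma$ we have
\[
\widehat{\varphi}(\gamma)=\Tr\Big(\sigma_\gamma(p)\sum_{s\in\Lambda}\varphi(s)\sigma_{s^{-1}}(p)\Big)=\sum_{s\in\Lambda}\varphi(s)\,a_{\gamma,s},\qquad a_{\gamma,s}:=\Tr\big(\sigma_\gamma(p)\sigma_{s^{-1}}(p)\big).
\]
Since $\sigma_\gamma(p)$ and $\sigma_{s^{-1}}(p)$ are projections, each $a_{\gamma,s}$ is nonnegative. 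Using that $p$ is a fundamental domain for the $\Lambda$-action, $\sum_{s\in\Lambda}\sigma_{s^{-1}}(p)=1$ in the strong topology, so by normality of $\Tr$ and trace preservation of $\sigma_\gamma$ we get $\sum_{s\in\Lambda}a_{\gamma,s}=\Tr(\sigma_\gamma(p))=\Tr(p)=1$ for every $\gamma$. Thus for each fixed $\gamma$ the weights $(a_{\gamma,s})_{s}$ form a probability distribution on $\Lambda$.

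The key observation is that for each \emph{fixed} $s\in\Lambda$ the function $\gamma\mapsto a_{\gamma,s}$ lies in $c_0(\Gamma)$. Indeed, identifying $p$ and $\sigma_{s^{-1}}(p)$ with vectors in $L^2(M,\Tr)$, one has $a_{\gamma,s}=\langle \sigma^0_\Gamma(\gamma)p,\sigma_{s^{-1}}(p)\rangle$, a matrix coefficient of the Koopman representation $\sigma^0_\Gamma$. By the argument already carried out in the proof of Lemma \ref{lem Fourier algebra}, $\sigma^0_\Gamma$ is intertwined with the amplified regular representation $\lambda_\Gamma\otimes id$, so every coefficient of $\sigma^0_\Gamma$ belongs to $A(\Gamma)\subseteq c_0(\Gamma)$.

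With these two facts the conclusion follows by a standard $\varepsilon/2$ splitting. Given $\varepsilon>0$, choose a finite set $S\subseteq\Lambda$ with $\sup_{s\notin S}|\varphi(s)|<\varepsilon/2$, which is possible because $\varphi\in c_0(\Lambda)$. The tail is controlled uniformly in $\gamma$ by $\big|\sum_{s\notin S}\varphi(s)a_{\gamma,s}\big|\le (\varepsilon/2)\sum_{s}a_{\gamma,s}=\varepsilon/2$, using nonnegativity of the weights and $\sum_s a_{\gamma,s}=1$. The remaining finite sum $\sum_{s\in S}\varphi(s)a_{\gamma,s}$ is a finite linear combination of functions in $c_0(\Gamma)$, hence itself in $c_0(\Gamma)$, so it is below $\varepsilon/2$ outside some finite subset of $\Gamma$. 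Combining the two bounds shows $\widehat{\varphi}\in c_0(\Gamma)$.

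I do not expect a serious obstacle: the only nontrivial input is the $c_0$-ness of the individual coefficients $a_{\gamma,s}$, and this is supplied verbatim by the Koopman-versus-regular-representation computation in Lemma \ref{lem Fourier algebra}. The one point demanding minor care is justifying the interchange of the infinite sum over $s$ with the trace, which is legitimate by positivity and normality of $\Tr$.
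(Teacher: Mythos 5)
Your proof is correct and relies on the same essential input as the paper: the paper's proof simply cites \cite[Lemma 3.1.(b)]{Ish21} (stated there under a mixing hypothesis) and verifies that hypothesis by noting, via Lemma \ref{lem Fourier algebra}, that the Koopman coefficients $\gamma\mapsto\Tr(\sigma_\gamma(p)\sigma_{s^{-1}}(p))$ lie in $A(\Gamma)\subseteq C_0(\Gamma)$. Your argument uses exactly this fact and merely unpacks the cited lemma into the explicit probability-weight and $\varepsilon/2$ splitting, so it is a self-contained version of the same approach.
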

This lemma was presented in \cite[Lemma 3.1.(b)]{Ish21} under the condition that the $\Gamma$-action on $(M,\Tr)$ is mixing, i.e. all coefficients of the Koopman representation $\sigma^0_\Gamma$ vanish at infinity. This condition follows from the assumption that the $\Gamma$-action admits a fundamental domain. Indeed, as we have seen in the proof of Lemma \ref{lem Fourier algebra}, the coefficients of the Koopman representation are in the Fourier algebra, hence vanish at infinity.
We are ready to prove Theorem \ref{thm A}.
\begin{proof}[Proof of Theorem \ref{thm A}]
Let $(\varphi_i)$ be a net of functions in $M_d(\Lambda)$ converging to 1 in $\sigma(M_d(\Lambda),X_d(\Lambda))$-topology. By Lemma \ref{lem ind} and Lemma \ref{lem sigma conv}, the net $(\widehat{\varphi}_i)$ is in $M_d(\Gamma)$ with $\sup_i\|\widehat{\varphi}_i\|_{M_d(\Gamma)} \leq \sup_i \|\varphi_i\|_{M_d(\Lambda)}$ and converges to 1 in $\sigma(M_d(\Gamma),X_d(\Gamma))$-topology. By Lemma \ref{lem Fourier algebra}, if the net $(\varphi_i)$ is in $A(\Lambda)$, then the net $(\widehat{\varphi}_i)$ is in $A(\Gamma)$. It follows that if $\Lambda$ has $M_d$-AP, then so does $\Gamma$, and that $\Lambda_{WA}(\Gamma,d)\leq \Lambda_{WA}(\Lambda,d)$. The converse statements are also true by symmetry. Similarly, Lemma \ref{lem c0} shows that $\Lambda_{WH} (\Gamma,d)=\Lambda_{WH} (\Lambda,d)$. This completes the proof.
\end{proof}
\begin{rem}
The above proof works to see that $M_d$ type approximation properties pass to vNE-subgroups.
\end{rem}
\begin{rem} One can give an alternative proof that all discrete countable amenable groups are $M_d$-WA for all $d\geq 2$ using 
Theorem \ref{thm A}. Indeed, if $\Gamma$ is a discrete countable amenable group, then $\Gamma$ is a ME-subgroup of $\Z$ \cite{OW80}, hence $\Gamma$ is $M_d$-WA by Theorem \ref{thm A} and Example \ref{example Z}.
\end{rem}

\subsection{Inheritance from lattices} 
In this section, we prove Theorem \ref{thm B}. Although this fact implicitly follows from the argument used in the previous subsection and some known similar results on weak amenability, approximation property, and weak Haagerup property \cite{Haa16,HK94, Knu16}, we provide a complete explanation for convenience.

Let us recall first how the case $d=2$ was handled. A discrete subgroup $\Gamma$ of $G$ is called a \textit{lattice} if there is a finite measure subset $\Omega\subseteq G$ such that $G = \bigsqcup_{\gamma\in \Gamma} \Omega\gamma$. In this case, there exist measurable maps $\gamma: G \rightarrow\Gamma$ and $\omega:G\rightarrow \Omega$ such that every element $g\in G$ can be uniquely written as $g= \omega(g)\gamma(g)$. The induction map used in \cite[Section 2]{Haa16} is defined as 
\begin{align*}
\Phi_1: \varphi\in M_2 (\Gamma)\mapsto \widetilde{\varphi}\in M_2(G),\quad \widetilde{\varphi}  = \chi_{\Omega}*(\varphi \mu_\Gamma)*\chi_{\Omega}^*,
\end{align*} where $\mu_\Gamma$ is the measure on $G$ that counts $\Gamma$-elements, and $\chi_\Omega$ is the characteristic functions of $\Omega$. In other terms, we can write
\begin{align*}
\widetilde{\varphi}(g)  = \int_{\Omega} \varphi(\gamma (gw))dw \quad \text{for all} \quad g\in G.
\end{align*} It is a crucial fact that $\widetilde{\varphi}$ is a continuous bounded function on $G$. Moreover, we have the following.
\begin{enumerate}
\item \cite[Lemma 2.1]{Haa16} If $\varphi\in A(\Gamma)$, then $\widetilde\varphi \in A(G)$.
\item \cite[Lemma 5.13]{Knu16} If $\varphi\in c_0 (\Gamma)$, then $\widetilde\varphi\in C_0(G)$.
\item \cite[Proof of Lemma 1.16]{HK94} The dual map $\Phi_1^*:M_2(G)^*\rightarrow M_2(\Gamma)^*$ restricts to a well defined map $\Phi_1^*:L^1(G)\rightarrow \ell^1(\Gamma)$. In particular, $\Phi_1^*: X_2(G)\rightarrow X_2(\Gamma)$ is well defined and norm decreasing, and $\Phi_1$ is $\sigma(M_2,X_2)$-$\sigma(M_2,X_2)$-continuous.
\end{enumerate}

On the other hand, putting $(M,\Tr) = (L^\infty (G),\int dx)$, we have the action $\sigma: G\times \Gamma\rightarrow \Aut(M,\Tr)$ given by $\sigma_{(g,\alpha)} f(x) = f(g^{-1}x\alpha)$ for $f\in M$, $g,x\in G$, and $\alpha\in \Gamma$. The projection $p=\chi_{\Omega}$ is a finite trace fundamental domain for $\Gamma$-action. This data is enough to construct the induction map 
\begin{align*}
\Phi:  M_d(\Gamma) \rightarrow  M_d(G_{\mathrm{d}}),\quad \widehat{\varphi}(g) = \Tr (\sigma_g(p)\theta_p (\varphi)) \quad \text{for all}\quad
g\in G,
\end{align*} where $G_{\mathrm{d}}$ is the discrete realization of $G$.
It turns out that $\Phi_1$ and $\Phi$ coincide. Indeed, 
\begin{align*}
\widehat{\varphi}(g) &= \Tr (\sigma_g(p)\theta_p (\varphi)) = \int_G \chi_{\Omega}(g^{-1}x)\sum_{\alpha\in \Gamma}\varphi(\alpha)\chi_{\Omega}(x\alpha^{-1})dx 
\\
&= \int_\Omega \sum_{\alpha\in \Gamma}\varphi(\alpha)\chi_{\Omega}(gx\alpha^{-1})dx  = \int_\Omega \varphi(\gamma(gw))dw = \widetilde{\varphi}(g)
\end{align*} for all $g\in G$.
It follows that the map $\Phi:M_d(\Gamma)\rightarrow M_d(G)$ is well defined and norm decreasing. The same proof as (3) works to see that $\Phi$ is $\sigma(M_d,X_d)$-$\sigma(M_d,X_d)$-continuous. Now, Theorem \ref{thm B} is immediate.
\subsection{Some questions}
We end the paper with naturally arising questions.
\begin{enumerate}
\item Is unitarizability invariant under vNE? Does unitarizability pass to vNE-subgroups?
\item Is it possible to define $M_d$ type approximation properties for $C^*$-algebras, operator spaces, and von Neumann algebras in a compatible way with the current definition for discrete groups?
\end{enumerate}
The first question follows from the similarity problem because amenability pass to vNE-subgroups. The second question is quite promising  firstly because if two groups have isomorphic group von Neumann algebras, then they share the same value for $M_d$ type approximation properties by Theorem \ref{thm A}, and secondly because the case $d=2$ is done in \cite{Haa16,HK94,Knu16}.

\section*{Acknowledgment} The author is supported by JSPS fellowship program (P21737). The author is sincerely grateful to Narutaka Ozawa for the hospitality at the Research Institute for Mathematical Sciences, Kyoto University, and for the fruitful discussions.

\bibliographystyle{amsalpha}
\bibliography{mybibfile.bib}
\end{document}